\renewcommand{\subset}{\subseteq}
\renewcommand{\emptyset}{\varnothing}
\def\bes{\begin{equation*} }
\def\ees{\end{equation*} }
\def\eps{\varepsilon}
\def\al{\alpha}
\def\be{\beta}
\def\ga{\gamma}
\newsavebox\myboxA
\newsavebox\myboxB
\newlength\mylenA
\newcommand*\xoverline[2][0.75]{%
    \sbox{\myboxA}{$\m@th#2$}%
    \setbox\myboxB\null% Phantom box
    \ht\myboxB=\ht\myboxA%
    \dp\myboxB=\dp\myboxA%
    \wd\myboxB=#1\wd\myboxA% Scale phantom
    \sbox\myboxB{$\m@th\overline{\copy\myboxB}$}%  Overlined phantom
    \setlength\mylenA{\the\wd\myboxA}%   calc width diff
    \addtolength\mylenA{-\the\wd\myboxB}%
    \ifdim\wd\myboxB<\wd\myboxA%
       \rlap{\hskip 0.5\mylenA\usebox\myboxB}{\usebox\myboxA}%
    \else
        \hskip -0.5\mylenA\rlap{\usebox\myboxA}{\hskip 0.5\mylenA\usebox\myboxB}%
    \fi}
\newtheorem{theorem}            {Theorem}[section]
\newtheorem{thm}           [theorem]{Theorem}
\newtheorem{cor}          [theorem]{Corollary}
\newtheorem{prop}        [theorem]{Proposition}
\newtheorem{lem}              [theorem]{Lemma}
\newtheorem{ex}            [theorem]{Example}
\newtheorem{rem}             [theorem]{Remark}
\def\beq{ \begin{equation} }
\def\eeq{ \end{equation} }
\def\bep{\begin{proof}}
\def\eep{\end{proof}}
\def\ben{\begin{enumerate}}
\def\een{\end{enumerate}}
\def\bet{\begin{theorem}}
\def\eet{\end{theorem}}
\def\bel{\begin{lemma}}
\def\eel{\end{lemma}}
\def\la{\langle}
\def\mt{\nu}
\def\tr{\mbox{tr}}
\def\Ll{\Lambda}
\newcommand{\Mnns}{\R^{n\times n}}
\def\x{x}
\newcommand{\ax}{\langle\x\rangle}
\def\SRnn{\bbS_n}
\def\cB{ {\mathcal B} }
\def\cD{ {\mathcal D} }
\def\cH{ {\mathcal H} }
\def\cN{ {\mathcal N} }
\def\cS{{\mathcal S} }
\def\cX{\mathcal X}
\newcommand{\N}{{\mathbb N}}
\newcommand{\R}{{\mathbb R}}
\def\smatn{\bbS_n } 
\def\gtupn{\smatn^g}
\def\bbS{{\mathbb S}}
\def\ze{\zeta}
\def\La{\Lambda}
\def\del{\delta}
\def\la{\lambda}
\def\lL{\hat{\la}}
\def\ps{\mathfrak{P}}
\newcommand{\df}[1]{{\bf{#1}}{\index{#1}}}
\begin{document}

\title{The convex Positivstellensatz in a free algebra}

\author[Helton]{J. William Helton${}^1$}
\address{J. William Helton, Department of Mathematics\\
  University of California \\
  San Diego}
\email{helton@math.ucsd.edu}
\thanks{${}^1$Research supported by NSF grants
DMS-0700758, DMS-0757212, and the Ford Motor Co.}

\author[Klep]{Igor Klep${}^{2}$}
\address{Igor Klep, Department of Mathematics, 
The University of Auckland}
\email{igor.klep@auckland.ac.nz}
\thanks{${}^2$Research supported by the Slovenian Research Agency grants 
J1-3608 and P1-0222. Partly supported
by the Mathematisches Forschungsinstitut Oberwolfach Research in
Pairs RiP program. Partly supported by the program ``free spaces for creativity'' 
at the University of Konstanz.
The author thanks Markus Schweighofer for valuable
discussions. }

\author[McCullough]{Scott McCullough${}^3$}
\address{Scott McCullough, Department of Mathematics\\
  University of Florida, Gainesville %\\
   % Box 118105\\
   %  Gainesville, FL 32611-8105\\
   %  USA
   }
   \email{sam@math.ufl.edu}
\thanks{${}^3$Research supported by NSF grants DMS-0758306 and DMS-1101137.}

\subjclass[2010]{Primary 14P10, 46L07; Secondary  46N10, 13J30, 47A57}
\date{\today}
\keywords{free convexity, linear matrix inequality, 
Positivstellensatz, free real algebraic geometry, moment problem,
 free positivity}

\setcounter{tocdepth}{2}
\contentsmargin{2.55em} 
\dottedcontents{section}[3.8em]{}{2.3em}{.4pc} 
\dottedcontents{subsection}[6.1em]{}{3.2em}{.4pc}
%\dottedcontents{subsubsection}[8.4em]{}{4.1em}{.4pc}

\makeatletter
\newcommand{\mycontentsbox}{%
{\centerline{NOT FOR PUBLICATION}
\small\tableofcontents}}
\def\enddoc@text{\ifx\@empty\@translators \else\@settranslators\fi
\ifx\@empty\addresses \else\@setaddresses\fi
\newpage\mycontentsbox}
\makeatother

\begin{abstract}
  Given  a monic linear pencil $L$ in $g$ variables, 
  let $\ps_L=(\ps_L(n))_{n\in\N}$ where 
$$
 \ps_L(n):= \big\{ X\in \SRnn^g \mid L(X)\succeq0\big\},
$$
 and $\SRnn^g$ is the set of $g$-tuples of symmetric $n\times n$
 matrices.  Because $L$ is a monic linear pencil, each 
  $\ps_L(n)$ is convex  with interior, and conversely it is known that convex 
 bounded noncommutative semialgebraic sets with interior 
are all of the form $\ps_L$.
 The main result of this paper establishes a perfect 
 noncommutative Nichtnegativstellensatz on a convex semialgebraic set.
 Namely, a noncommutative matrix-valued polynomial 
 $p$ is \emph{positive semidefinite}
 on $\ps_L$ if and only if it
 has a \emph{weighted sum of squares representation} with \emph{optimal degree bounds}:
\[
 p  = s^* s   + \sum_j^{\rm finite} f_j^* L f_j,
\]
where
 $s, f_j$ are matrices of noncommutative polynomials of degree no greater than
 $\frac{\deg(p) }{2}$.
 This noncommutative result contrasts sharply with the commutative setting,
 where there is no control on the
 degrees of $s, f_j$
 and assuming only $p$ nonnegative,
as opposed to $p$ strictly positive,
   yields a clean
 Positivstellensatz so seldom that such cases are noteworthy.
\end{abstract}

%%%%%%%% below a text-only abstract
\iffalse
Given a monic linear pencil L in g variables let D_L be its positivity domain, i.e., the set of all g-tuples X of symmetric matrices of all sizes making L(X) positive semidefinite. Because L is a monic linear pencil, D_L is convex with interior, and conversely it is known that convex bounded noncommutative semialgebraic sets with interior are all of the form D_L. The main result of this paper establishes a perfect noncommutative Nichtnegativstellensatz on a convex semialgebraic set. Namely, a noncommutative polynomial p is positive semidefinite on D_L if and only if it has a weighted sum of squares representation with optimal degree bounds: p = s^* s + \sum_j f_j^* L f_j, where s, f_j are vectors of noncommutative polynomials of degree no greater than 1/2 deg(p). This noncommutative result contrasts sharply with the commutative setting, where there is no control on the degrees of s, f_j and assuming only p nonnegative, as opposed to p strictly positive, yields a clean Positivstellensatz so seldom that such cases are noteworthy.
\fi
%%%%%%%%%

\maketitle

\section{Introduction}
 A Positivstellensatz is an algebraic certificate for 
 a given polynomial $p$ to have a specific 
positivity property
 and such theorems date back in some form for over one hundred years
 for conventional (commutative) polynomials, cf.~\cite{BCR,Las,Lau,Mar,PD,Sce}.
 Positivstellens\"atze for  polynomials
 in noncommuting  variables are  creatures of this century - see
 \cite{HKM2,HM,KS1,PNA,DLTW}; for software equipped to dealing
with positive noncommutative polynomials we refer to \cite{HOSM,CKP}. 
 Often in the noncommutative setting 
 such theorems have cleaner statements than their commutative counterparts.
 For instance, a multivariate (commutative) polynomial on $\mathbb R^g$
 which is pointwise nonnegative need not be a sum of squares,
 but a  noncommutative polynomial
 which is nonnegative (in a sense made precise below) {\em is}
 a sum of squares - a result of the first author \cite{Hel}.

Classical commutative
Positivstellens\"atze  generally require $p$ to be strictly positive -   
 the cases where nonnegative  suffices are few and noteworthy, cf.~\cite{Sce},
 and the degrees of the  polynomials appearing in the representation of $p$
as a weighted sum of squares 
 are typically very high  compared to that of $p$. 
Furthermore, the semialgebraic set under consideration is often assumed
to be bounded \cite{Smu,Put}.

The main result of \cite{HM}
 gave a Positivstellensatz for matrix-valued noncommutative polynomials
 which was an exact extension, warts and all
(the strict  positivity
 assumption, possibility of high degree weights, and boundedness),
  of the commutative Putinar Positivstellensatz
 \cite{Put}.
 While gratifying, it was not, as in retrospect we have come to expect
 in the free algebra setting,  cleaner than its
  commutative counterpart. 
 What we find in this paper for noncommutative polynomials is
 that when the underlying  semialgebraic set
%, defined 
% by a matrix-valued noncommutative polynomial $q$, is
%  \emph{convex}, 
 is defined by a concave matrix-valued noncommutative polynomial $q$, %
 a {\it ``perfect" Positivstellensatz} holds; namely, 
 a representation 
 \bes
 p= \sum_j^{\rm finite} s_j^* s_j   + \sum_j^{\rm finite} f_j^* q f_j 
 \ees
 where $s_j, f_j$ are noncommutative matrix-valued 
 polynomials of degree no greater than
 $\frac{\deg(p) + 2}{2}$ holds
 for any $p$ which is ``nonnegative" 
 on the set $\ps_q$ where $q$ is ``nonnegative,"
 irrespective of the boundedness of the semialgebraic set
$\ps_q$
 defined by $q$.
% ** NEW although we require $\cD_q$  to be the closure
% of its interior. **
% In particular, the main result in this article says,
%  under the  hypothesis that the noncommutative
% polynomial  $q$ is concave, 
% if  $p$ nonnegative on the set
% $\cD_q$, the set where $q$ is nonnegative, then $p$
% has  a ``perfect" Positivstellensatz representation.
 Indeed this result is a Nichtnegativstellensatz, as $p$ 
 is only assumed to be nonnegative
 on $\ps_q$.  Thus, compared with the 
 main result of \cite{HM}, the hypothesis that $q$ is concave % convex 
 has been added, but the boundedness (or archimedean) hypothesis
 as well as the strict positivity hypothesis have been dropped,
 and the resulting  weighted sum of squares representation 
 is improved by giving optimal degree bounds. 
 As a corollary, when $q=1$ and $\ps_q$ is  everything,
   we recover the result  mentioned 
 in the first paragraph: nonnegative noncommutative polynomials are
 sums of squares.

 In the remainder of this 
 introduction, we state our main result
 after providing the needed background and
 definitions. 
 Then we give some examples.

\subsection{Words and NC polynomials}%\label{subsec:NCpoly}
 Given positive integers $n$ and $g$,
  let  $(\Mnns)^{g}$
  denote the set of $g$-tuples of real $n\times n$ matrices.
  A natural norm on $(\Mnns)^{g}$ is given by 
 \[
  \|X\|^2 =\sum^g \|X_j\|^2
 \]
  for $X=(X_1,\dots,X_g)\in (\Mnns)^{g}$. 
We use $\SRnn$ to denote real symmetric $n\times n$ matrices.

We write $\ax$ for the monoid freely
generated by $\x=(x_1,\ldots, x_g)$, i.e., $\ax$ consists of \df{words} in the $g$
noncommuting
letters $x_{1},\ldots,x_{g}$
(including the {\bf empty word $\emptyset$} which plays the role of the identity).
Let $\R\ax$ denote the associative
$\R$-algebra freely generated by $\x$, i.e., the elements of $\R\ax$
are polynomials in the noncommuting variables $\x$ with coefficients
in $\R$. Its elements are called \df{(nc) polynomials}.
An element of the form $aw$ where $0\neq a\in \R$ and
$w\in\ax$ is called a \df{monomial} and $a$ its
\df{coefficient}. Hence words are monomials whose coefficient is
$1$.
Endow $\R\ax$ with the natural \df{involution} ${}^*$
which fixes $\R\cup\{\x\}$ pointwise, reverses the
 order of words, and acts linearly on polynomials. 
For example, 
$(2-  3 x_{1}^2 x_{2} x_{3})^* =2  -3 x_{3} x_{2} x_{1}^2.$ 
Polynomials invariant with respect to this involution are \df{symmetric}.
The
length of the longest word in a noncommutative polynomial $f\in\R\ax$ is the
\df{degree} of $f$ and is denoted by $\deg( f)$. The set
of all words of degree at most $k$ is $\ax_k$, and $\R\ax_k$ is the vector
space of all noncommutative polynomials of degree at most $k$.

 Fix positive integers
 $\mt$ and $\ell$.  \df{Matrix-valued noncommutative polynomials} -- 
  elements of 
$\mathbb R^{\ell\times\mt}\ax=\R^{\ell\times\mt}\otimes \R\ax;$ 
 i.e.,
 $\ell\times\mt$ matrices with entries from $\R\ax$ -- will play a role
 in what follows. Elements of $\mathbb R^{\ell\times\mt}\ax$ are conveniently 
 represented using tensor products as
\beq\label{eq:preeq0}
  P=\sum_{w\in\ax} B_w \otimes w\in\R^{\ell\times\mt}\ax,
\eeq
  where $B_w\in \R^{\ell\times\mt}$, and the
 sum is finite. 
  Note that the involution ${}^*$ extends to matrix-valued polynomials by
\[
  P^* =\sum_w B_w^* \otimes w^*\in\R^{\mt\times\ell}\ax. 
\]
  If $\mt=\ell$ and $P^*=P$, we say $P$ is \df{symmetric}.

In the sequel, the tensor product will be reserved to 
denote the (Kronecker) tensor product
of matrices. 
Thus we will omit the tensor product notation for matrix-valued polynomials
and instead of \eqref{eq:preeq0} write simply
\[
 P=\sum_{w\in\ax} B_w  w\in\R^{\ell\times\mt}\ax.
\]

\subsubsection{Polynomial evaluations}
 If $p\in\R\ax$ is a noncommutative polynomial and $X\in (\Mnns)^{g}$,
 the evaluation $p(X)\in\Mnns$ is defined in the natural way by 
 replacing $x_{i}$ by $X_{i}$ and sending the empty word to
  the appropriately sized identity matrix. 

%%%%%%% explicit poly eval commented out
%For example, if $p=3 x_{1} x_{2}$, 
%then
%$$
%p\left( \begin{bmatrix}
%0&1 \\
%1 & 0
%\end{bmatrix},
%\begin{bmatrix}
%1&0\\
%0&-1
%\end{bmatrix}
%\right)=
%3  \begin{bmatrix}
%0&1 \\
%1 & 0
%\end{bmatrix}
%\,
%\begin{bmatrix}
%1&0\\
%0&-1
%\end{bmatrix}
%=
%\begin{bmatrix}
%0 & -3\\
%3 & 0 
% \end{bmatrix}.
%$$
%$Similarly, if $p(x)=\alpha\in\R$ and $X\in (\Mnns)^{g}$,
%$then $p(X)=\alpha I_n$.
%%%%  explict poly eval commented out

Most of our evaluations will be on tuples of \emph{symmetric}
matrices $X\in \SRnn^{g}$; our involution
fixes the variables $\x$ elementwise, so only these evaluations
give rise to $*$-representations of noncommutative polynomials.
Polynomial evaluations extend to matrix-valued polynomials by evaluating 
entrywise. Note that if $P\in\R^{\ell\times \ell}\ax$ is symmetric, and $X\in      \SRnn^{g}$, then
 $P(X)\in\R^{\ell n\times \ell n}$ is a symmetric matrix. 

\subsection{Linear and concave polynomials}
  If $A_1,\dots,A_g$ are symmetric $\ell \times \ell$ matrices, then
\beq
\label{eq:defLam}
  \Ll_A: = \sum_{j=1}^g A_j x_j
\eeq
  is a (homogeneous) symmetric  linear matrix-valued polynomial, also called a
\df{(homogeneous) linear pencil}. 
To $\Ll_A$ we associate the \df{monic linear pencil}
$$
I-\Ll_A = I_{\ell} - \sum_{j=1}^g A_j x_j.
$$ 

 A symmetric $q\in \mathbb R^{\ell\times \ell}\ax$ is \df{concave} provided
\bes
q\big(tX + (1-t)Y\big) \succeq t q(X) +(1-t)q(Y), \quad 0 \leq t \leq 1
\ees
for all $n\in\N$ and $X,Y \in \gtupn$.  
 The main result in \cite{HM0}  tells us that if 
  $q$ is scalar-valued (i.e., $\ell=1$)
 and $q(0)=I_\ell$, then $q$  is concave if and only if it has the form
\beq
\label{eq:quad}
q(x)= I_\ell - \Ll(x) - s^*(x) s(x)
\eeq
 for some homogeneous linear polynomial $\Ll\in\R\ax$ and homogeneous linear 
 vector-valued $s\in\R^{\ell\times 1}\ax$. 
  This result remains true, with the obvious modifications,
  for $q$ matrix-valued.  A proof is given
  in Subsection \ref{subsec:matrix-concave}.

\subsection{The Positivstellensatz}
For $f\in\R^{\ell\times\mt}\ax$, an element of the form $f^*f
\in \R^{\mt\times\mt}\ax$ will be called a
\df{(hermitian) square}.
Let 
$\Sigma^{\mt}$ denote the cone of sums of squares of
$\mt\times\mt$ matrix-valued polynomials, and, 
 given a nonnegative integer $N$, 
let $\Sigma_N^\mt\subseteq\Sigma^\mt$ denote sums of squares of 
polynomials of degree at most $N$.  Thus elements of $\Sigma_N^\mt$ 
have degree at most $2N$,
i.e., $\Sigma^\mt_N\subseteq\R^{\mt\times\mt}\ax_{2N}$.   
Conversely, since the highest order terms in a sum of squares
cannot cancel, we have $\R^{\mt\times\mt}\ax_{2N}\cap\Sigma^\mt=\Sigma^\mt_N$.

 Fix a symmetric $q\in \mathbb R^{\ell\times\ell}\ax$.
 Let
\[
 \ps_q(n) : =\{X \in \gtupn \mid   q(X) \succeq 0 \} 
\qquad \mbox{and} \qquad 
\ps_q := \bigcup_{n\in\N}  \ps_q(n).
\]

 Given $\alpha,\beta\in\N$, set 
\begin{equation}
 \label{eq:Malbeta}
  M_{\al, \beta}^{\mt}(q):=
   \Sigma_{\al}^{\mt}  +\Big\{ \sum_i^{\rm finite}  f_i^* qf_i \mid 
   \  f_i \in \R^{\ell\times \mt}\ax_\beta \Big\} 
  \  \subseteq \ \R^{\mt\times \mt}\ax_{\max \{2\al, 2\beta +a\}},
\end{equation}
 where $a=\deg(q)$.
Obviously, if $f\in M_{\al,\beta}^{\mt}(q)$ then $f|_{\ps_q}\succeq0$.

We call $M_{\al,\beta}^\mt(q)$ the \df{truncated quadratic module} and 
$\ps_q$ the \df{noncommutative (nc) semialgebraic set} defined by $q$.
If $q$ has degree one, then $\ps_q$ is also called an 
\df{LMI (linear matrix inequality) domain}.
 We often abbreviate $  M_{\al, \beta}^{\mt}(q)$   to 
$  M_{\al, \beta}^{\mt}$.   If $q(0)=I$ ($q$ is \df{monic}), then
 $\ps_q$ contains an \df{nc neighborhood of $0$};
 i.e., there exists $\eps>0$ such that for each $n\in\N$, if
  $X\in\SRnn^{g}$ and $\|X\|<\eps$,
  then $X\in \ps_q$.  
 Likewise $\ps_q$ is called \df{bounded} provided there is a number $R$
 for which all $X\in \ps_q$ satisfy $ \|X\| <R$.

 The following  is the free convex Positivstellensatz, the main
result of this paper.

\begin{thm}[Convex Positivstellensatz]
\label{thm:mainConcave}
 Suppose $q\in\R^{\ell\times \ell}\ax$ and $p\in\R^{\mt\times\mt}\ax$ are
symmetric 
  matrix-valued  noncommutative polynomials.
\ben[\rm (1)]
\item
 \label{thmmainitem1}
 If $q$ is  concave and monic 
 and $\deg(p)\leq 2d+1$,
  then 
\[
p(X)\succeq 0 \text{ for all } X\in \ps_q\quad\iff\quad
   p\in M_{d+1,d}^{\mt}(q).\] 
\item
 \label{thmmainitem2}
 If $q$ is a monic linear pencil
  and $\deg(p)\leq 2d+1$, then
\[
p(X)\succeq 0 \text{ for all } X\in \ps_q\quad\iff\quad
   p\in M_{d,d}^{\mt}(q).\] 
\een
If, in addition, the set $\ps_q$ is  bounded, 
the right-hand side of {\rm (1)}
is 
equivalent to
\[ p \in
\Big\{  \sum_j^{\rm finite} f_{j}^* q f_{j} \mid 
   f_{j}\in\R^{\ell\times \mt}\ax_{d+1} \Big\} = : \mathring M_{d+1}^\mt(q),
\]
while the right-hand side of {\rm (2)} is equivalent to
$p\in  \mathring M_{d}^\mt(q)$.
%%{\igor ** please check.  OK by bill}
\end{thm}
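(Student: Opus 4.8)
The easy direction of each equivalence is the remark recorded just after \eqref{eq:Malbeta}: every element of a truncated quadratic module is positive semidefinite on $\ps_q$. For the converse I would first reduce the concave case to the monic linear pencil case. Writing a concave monic $q$ as $q=I-\Ll-s^*s$ (the matrix-valued version of \eqref{eq:quad}), the Schur complement shows that $q(X)\succeq 0$ precisely when $\tilde L(X)\succeq 0$, where
\[
\tilde L=\begin{pmatrix} I-\Ll & s^* \\ s & I\end{pmatrix}
\]
is a monic linear pencil ($\tilde L(0)=I$); hence $\ps_q=\ps_{\tilde L}$. Given $p\succeq 0$ on $\ps_q$ with $\deg p\le 2d+1$, the linear case applied to $\tilde L$ gives $p=\sigma+\sum_j f_j^*\tilde L f_j$ with $\sigma\in\Sigma_d^{\mt}$ and $\deg f_j\le d$; splitting $f_j=\begin{pmatrix} g_j\\ h_j\end{pmatrix}$ into its top $\ell$ rows $g_j$ and the rest, the identity $f_j^*\tilde L f_j=g_j^*q\,g_j+(h_j+sg_j)^*(h_j+sg_j)$ rewrites $p$ as an element of $M_{d+1,d}^{\mt}(q)$, since $\deg g_j\le d$ while $\deg(h_j+sg_j)\le d+1$. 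So everything rests on the case $q=L=I-\Ll$ monic linear.

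Fix such an $L$ and a symmetric $p$ with $\deg p\le 2d+1$ that is positive semidefinite on $\ps_L$, and suppose toward a contradiction that $p\notin M_{d,d}^{\mt}(L)$. Both cones lie inside the finite-dimensional real space of symmetric elements of $\R^{\mt\times\mt}\ax_{2d+1}$. The first step is to verify that $M_{d,d}^{\mt}(L)$ is \emph{closed}: the cone $\Sigma_d^{\mt}$ is closed because highest-degree terms of a hermitian square cannot cancel, and this is upgraded to the full truncated module using that $L$ is monic and $\ps_L$ has nonempty interior, so that $M_{d,d}^{\mt}(L)$ is a pointed cone; closedness then follows by a Carath\'eodory-plus-normalization argument, the point being that the value of a member of $M_{d,d}^{\mt}(L)$ at $0$ controls the constant parts of its certificate while the top-degree part controls the leading parts. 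With closedness in hand, the Hahn--Banach separation theorem yields a linear functional $\lambda$ on symmetric $\R^{\mt\times\mt}\ax_{2d+1}$ with $\lambda\ge 0$ on $M_{d,d}^{\mt}(L)$, $\lambda(p)<0$, and, since $I_{\mt}\in M_{d,d}^{\mt}(L)$, normalizable so that $\lambda(I_{\mt})=1$.

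The core of the argument is to turn $\lambda$ into a $g$-tuple $X$ of symmetric matrices lying in $\ps_L$, together with a vector $v$ for which $\langle p(X)v,v\rangle=\lambda(p)<0$, contradicting $p\succeq 0$ on $\ps_L$. Since $\lambda\ge 0$ on $\Sigma_d^{\mt}$, the form $(a,b)\mapsto\lambda\big(\tfrac12(b^*a+a^*b)\big)$ on $\R^{1\times\mt}\ax_d$ is positive semidefinite, and a truncated Gelfand--Naimark--Segal construction produces a finite-dimensional Hilbert space $H$, a distinguished vector $v$, and symmetric operators $\hat X_1,\dots,\hat X_g$ defined on the subspace of $H$ coming from polynomials of degree $<d$, with $\lambda(r)=\langle r(\hat X)v,v\rangle$ for $r$ of small enough degree. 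Positivity of $\lambda$ on $\{f^*Lf:\deg f\le d\}$ says exactly that a compression of $L(\hat X)$ is positive semidefinite. Because $L$ is \emph{affine linear} in $x$, one can dilate $\hat X_1,\dots,\hat X_g$, enlarging $H$ to a finite-dimensional $K$, to honest symmetric matrices $X_1,\dots,X_g$ with $L(X)\succeq 0$ on all of $K$, in such a way that compression back to $H$ still recovers $\lambda$ on $\R^{\mt\times\mt}\ax_{2d+1}$; the degree-$(2d+1)$ information needed for $\langle p(X)v,v\rangle=\lambda(p)$ is carried by products of a degree-$d$ and a degree-$(d+1)$ word, and $p$ has no higher-degree terms. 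I expect this dilation --- an Arveson/Stinespring-type extension that succeeds exactly because the constraint cutting out $\ps_L$ has degree one --- to be the main obstacle, together with the closedness lemma; it is this restriction to degree-one constraints that forces the Schur-complement detour for concave $q$, and it is what yields the sharp exponents $\al=\be=d$.

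Finally, suppose $\ps_q$, equivalently $\ps_{\tilde L}$, is bounded. Then the homogeneous part of the relevant monic pencil is definite-free, which supplies an Archimedean-type identity: for every $m$ there are \emph{constant} matrices $V_k$ with $I_m=\sum_k V_k^*LV_k$. Substituting $I=\sum_k V_k^*LV_k$ into any hermitian square $s^*s=s^*Is$ appearing in the representation of $p$ above turns it into $\sum_k(V_ks)^*L(V_ks)$ with $\deg(V_ks)=\deg s$, so the sum-of-squares part is absorbed into the $L$-weighted part with no rise in degree, giving $p\in\mathring M_d^{\mt}(L)$ in case (2); an analogous absorption, carried out after the Schur-complement reduction, gives the stated refinement $p\in\mathring M_{d+1}^{\mt}(q)$ in case (1).
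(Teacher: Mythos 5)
Your overall architecture matches the paper's: reduce concave to monic linear via the Schur complement, show the truncated quadratic module is closed, separate with Hahn--Banach, realize the separating functional by a truncated GNS construction, and handle the bounded case by an Archimedean-type identity $I=\sum_k W_k^*L\,W_k$. The reductions in your first and last paragraphs are essentially the paper's Lemma~\ref{lem:lincon} and Proposition~\ref{prop:IVLV}.

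The place where your route diverges, and where it has a genuine gap, is the GNS/dilation step. You elect to separate $p$ from $M_{d,d}^{\mt}(L)$ inside $\R^{\mt\times\mt}\ax_{2d+1}$, so the resulting functional $\lambda$ knows only moments up to degree $2d+1$. That is not enough to build honest symmetric operators: the bilinear form $\lambda(b^*a)$ is then defined only for $a,b$ of degree $\leq d$, the multiplication operators $\hat X_j$ carry degree $d$ into degree $d+1$, and you have no inner product there against which to project back. You are left with partial operators, and you propose to complete them by ``dilating'' to matrices $X$ with $L(X)\succeq 0$ that still reproduce $\lambda$ on $\ax_{2d+1}$. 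That step is not a routine dilation: you must simultaneously (i) make $L(X)\succeq 0$ on the enlarged space and (ii) preserve all mixed moments $\lambda(w)$ for words $w$ of degree up to $2d+1$, which through $w=w_1^*w_2$ involves the action of $X$ on the degree-$(d+1)$ level that you have not constructed. Nothing in Arveson--Stinespring gives this for free, and you flag it yourself as ``the main obstacle'' without supplying it.

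The paper sidesteps the obstacle by \emph{not} trying to prove the $M_{d,d}$ statement directly. It first proves the a priori weaker $(2d+1)$-PosSs-property for $M_{d+1,d}^{\mt}(L)$ (Proposition~\ref{prop:igor}). That cone lives in $\R^{\mt\times\mt}\ax_{2d+2}$, so the Hahn--Banach functional $\lambda$ is defined on $\ax_{2d+2}$. With that extra degree, the GNS construction of Proposition~\ref{prop:gns} builds $X_j$ as genuine symmetric operators on all of $\cX=\R^{\mt\times 1}\ax_d$ (not partial operators), and the crucial computation \eqref{eq:ruleone} yields $L(X)\succeq 0$ outright, with no compression and no dilation. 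The gap between the hypothesis degree $2k+2$ and the conclusion degree $2k+1$ in Proposition~\ref{prop:gns} is exactly what makes this work for a degree-one pencil; see Remark~\ref{rem:modify}. Finally, passing from $M_{d+1,d}^{\mt}(L)$ to $M_{d,d}^{\mt}(L)$ is the elementary cancellation argument of Subsection~\ref{subsec:fromdplusonetod}: the top-degree squares would have degree $2d+2$, which nothing else in the certificate can cancel. If you want to retain your organization, you would need either to spell out and justify the dilation, or else first do a flat extension of $\lambda$ from $\ax_{2d+1}$ to $\ax_{2d+2}$ (after perturbing by a strictly positive functional, as in Lemma~\ref{technical-lemma}, to make the restriction to $\ax_{2d}$ positive definite); both amount to re-deriving the degree-$2d+2$ data the paper simply takes from the start.
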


\begin{proof}
The proof of (1) and (2) is laid out in Subsection \ref{subsec:fromdplusonetod}.
The last fact is an immediate consequence of (1) and (2) and
Proposition \ref{prop:IVLV};
see Subsection \ref{subsec:dominate} for details.
\end{proof}

\begin{rem}\rm
% \label{rem:testrank}
  It is easy to see that given $k,\mt\in\N$ there exists a positive
  integer $t$ so that for 
  a symmetric $p\in\R^{\mt\times \mt}\ax_k$,
we have
  $p(X)\succeq 0$ for all $X\in \ps_q$ if and only
  if $p(X)\succeq 0$ for all $X\in\ps_q(t).$
  The smallest such $t$ 
  is called the $(k,\mt)$-\df{test rank} of $\ps_q$.
  Routine arguments show
  that this $(k,\mt)$-test rank is at most 
 $\mt \sigma_{\#}\big(\lceil \frac k2\rceil \big)$,
  where $$\sigma_{\#}(d):=\dim \R\ax_d= \sum_{j=0}^d g^j,$$
and $\lceil r \rceil$ denotes 
the smallest integer not less than $r$.

   There is also a 
   bound on the number of 
   summands in a certificate of the form $p\in M_{d+1,d}^{\mt}(q)$ or
$p\in M_{d,d}^{\mt}(q)$, 
   coming from Caratheodory's theorem
\cite[Theorem I.2.3]{Ba}
   on convex subsets of finite dimensional spaces.  
   For example, in case (1) of Theorem \ref{thm:mainConcave} it is $1+ \dim
\big( \R^{\mt\times\mt}\ax_{2d+1} \big)= 1+ \mt^2 \sigma_{\#}(2d+1)$.
\end{rem}

\begin{rem}\rm
 The main result of 
\cite{HM3} says that if $q$ is symmetric, matrix-valued,
  monic, and the connected component, $\cD_q,$ of $0$
 of
\[
\mathring \ps_q := \bigcup_{n\in\N} \big\{ X \in \gtupn \mid q(X)\succ 0 \big\}
\]
 is bounded and  convex, then there is a monic  linear
 pencil $L$ such that the closure of $\cD_q$ 
is of the form $\ps_L$. In particular, if $\mathring \ps_q$ 
 is itself convex, then its closure is $\ps_L$
 for some $L$.  %% \marginpar{\igor please check** OK by bill}  
 In this sense,\\[.1cm]
\centerline{\emph{Theorem {\rm\ref{thm:mainConcave}}
 establishes a perfect Positivstellensatz on a convex nc 
 semialgebraic set}.}  
\end{rem}

\begin{rem}\rm
% \label{rem:dominate}
In \cite{HKM} we studied LMI domains and their inclusions.
The linear Positivstellensatz there
\cite[Theorem 1.1]{HKM} states the following:
If $q,r$ are two monic linear pencils with 
$\ps_q$ bounded and $r$ is of size $\mt\times\mt,$ then
$\ps_q\subseteq \ps_r$ if and only if $r\in\mathring M^{\mt}_{0}(q)$.
So this is a very special case
of Theorem \ref{thm:mainConcave}.
Furthermore, \cite[Theorem 5.1]{HKM} is a very 
weak form of Theorem \ref{thm:mainConcave}.
 The techniques of proof in \cite{HKM}
are completely different than those here.
 We give further details and discuss the connection to complete positivity
 in Subsection \ref{subsec:dominate}. 
Intriguing is the fact that the special case of Theorem 
\ref{thm:mainConcave}
  where $p$ is affine linear implies a version of the Arveson Extension 
Theorem and the Stinespring
  Representation for matrices (as opposed  to operators).
\end{rem}

%\begin{rem}\rm
% \label{rem:needmonic}
The conclusion of Theorem \ref{thm:mainConcave} may fail
if $q$ is not assumed to be monic as the following
 examples show.
%\end{rem}

\begin{ex}\rm
%\ben[\rm(1)]
%\item
Let $$q=\begin{bmatrix}x & 1 \\ 1 & 0 \end{bmatrix} \in \R^{2\times 2}\ax_1.$$
Then $\ps_q=\varnothing$, so $p:=-1 \in \R^{1\times 1}\ax_0$ satisfies
$-1|_{\ps_q}\succeq0$, but $-1\not\in M^1_{0,0}$. 
However,
for $$u:=\begin{bmatrix}1 & -1-\frac x2\end{bmatrix}^*,$$ we have
$$-1=\frac12u^*qu,$$
showing that $-1\in\mathring M_{1}^1$.

For details and more on
the study of \emph{empty} LMI domains we refer the reader to \cite{KS}.
One of the main results there states that $\ps_q$ is empty (for a nonhomogeneous
linear pencil $q$) if and only if the truncated quadratic module 
$M_{\al,\al}^1(q)$ (in the ring $\R[x]$ of polynomials in \emph{commuting} variables) contains $-1$ for some (explicitly computable) $\al\in\N$.
\end{ex}

%\item
\begin{ex}\rm
For another example consider
$$q= \begin{bmatrix} 1 & x \\ x & 0\end{bmatrix}.$$
Then $\ps_q=\{0\}$. Hence obviously $x\succeq0$ on $\ps_q$. 
But it is easy to see that $x\not\in M_{\al,\be}^1(q)$ for any $\al,\be\in\N$; cf.~\cite[Example 2]{Za}.
%\een
\end{ex}

\def\mk{\kappa}
\subsection{Guide to the rest of the paper}

  Given $\alpha,\beta\in\N$, let  $a=\deg(q)$ and 
$$\mk=\max\{2\al,2\beta + a\}.$$

In view
  of Theorem \ref{thm:mainConcave},
  we say that the truncated quadratic module $M^{\mt}_{\al,\be}(q)$ has the 
  \df{$\theta$-PosSs-property}
  if, for a symmetric polynomial $p\in \R^{\mt\times\mt}\ax_\theta$,
  the property $p(X)\succeq 0$ for all $X\in\ps_q$ implies
  $p\in M^{\mt}_{\al,\be}(q)$ (the converse being automatic).  Note that
    $M^{\mt}_{\al,\be}(q) \subset \R^{\mt\times\mt}\ax_\theta$ and thus the
     definition is sensible only for $\theta \le \mk$.
  
  The difficult part in proving Theorem \ref{thm:mainConcave}
  is showing that $M^{\mt}_{d+1,d}(q)$ 
 has the $(2d+1)$-PosSs-property in the case that $q$ is a monic linear pencil.
  The argument occupies the bulk of this
 article. The reduction to this case
  and other preliminaries are in
  the following section, Section \ref{sec:red}.
  The passages from $q$ linear to $q$
 concave and from $M^{\mt}_{d+1,d}(q)$ to $M^{\mt}_{d,d}(q)$ are
  rather simple and the details are 
 found in Subsections
  \ref{subsec:concaveToLin} and \ref{subsec:fromdplusonetod}.
  Section \ref{sec:red} ends with a brief discussion of
  connections to Hankel matrices
  and free noncommutative moment problems.
 The proof of Theorem \ref{thm:mainConcave} 
 culminates in Subsection \ref{subsec:sep}, using the
 results on positive linear functionals from Subsection \ref{subsec:flathank}.

 In the last section we discuss 
 connections to LMI domination and complete positivity (Subsection 
 \ref{subsec:dominate}), and  outline in Subsection \ref{subsec:beyondCon}
 an improvement of the results of \cite{HMP} obtained by
 the approach here in the absence of concavity of $q$
  (or convexity of the underlying semialgebraic set).

% in the absence of concavity of $q$ the approach here
% generates an improvement on the results of \cite{HMP}.

\section{Reductions and preliminaries}
 \label{sec:red}

  In this section we make first steps towards the proof of 
  Theorem \ref{thm:mainConcave}. We start by giving 
 preliminaries on concave polynomials needed for two reductions
 in the subsequent subsections.

\subsection{Concave polynomials}
 \label{subsec:matrix-concave}
  The structure of  symmetric concave matrix-valued polynomials is 
  quite rigid.

\begin{prop}
 \label{prop:concaveform}
  If $q$ is a symmetric concave matrix-valued polynomial with $q(0)=I$,  
  then
  there exists a homogeneous linear pencil $\Lambda$ 
  and a homogeneous linear matrix-valued polynomial
  $s$ such that
\[
   q = I-\Lambda - s^* s. 
\]
\end{prop}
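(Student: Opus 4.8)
The plan is to analyze the low-degree structure of a symmetric concave matrix-valued polynomial $q$ by exploiting concavity quantitatively. First I would write $q = \sum_{k\ge 0} q_k$ where $q_k$ denotes the homogeneous part of degree $k$, so $q_0 = q(0) = I$, $q_1 = \Lambda$ is a homogeneous linear pencil, $q_2$ is a homogeneous quadratic, and higher parts $q_{\ge 3}$ collect the rest. The goal is to show $q_k = 0$ for $k \ge 3$ and that $q_2 = -s^*s$ for a homogeneous linear matrix-valued $s$. The standard device is to test concavity along the line segment joining $-X$ and $X$ (or $0$ and $tX$): concavity of $q$ restricted to $t \mapsto q(tX)$ for a fixed tuple $X \in \gtupn$ means $t \mapsto \langle v, q(tX) v\rangle$ is a concave scalar polynomial in $t$ for every vector $v$. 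A univariate polynomial $\sum_k t^k \langle v, q_k(X) v\rangle$ being concave on all of $\R$ (it must be, since $X$ can be scaled and we may take $t$ over a full interval by rescaling $X$, and $\R$ can be reached by absorbing scalars into $X$) forces it to have degree at most $2$; hence $\langle v, q_k(X) v\rangle = 0$ for all $k \ge 3$, all $n$, $X \in \gtupn$, $v$. Since a symmetric nc polynomial vanishing on all symmetric tuples (in this strong, evaluation-sense) is zero — this is the standard fact that nc polynomials are determined by their values on tuples of symmetric matrices of all sizes — we conclude $q_k = 0$ for $k\ge 3$, so $q = I - \Lambda + q_2$ with $q_2$ homogeneous of degree $2$ and symmetric.

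Next I would pin down $q_2$. Concavity of $t \mapsto \langle v, q(tX)v \rangle$ at degree $2$ forces the leading coefficient $\langle v, q_2(X) v\rangle \le 0$ for all $n$, $X \in \gtupn$, $v \in \R^{\ell n}$; i.e., $-q_2$ is a symmetric homogeneous quadratic nc polynomial that is positive semidefinite on all symmetric tuples. The key structural input here is the representation theorem for nc polynomials that are positive semidefinite on all symmetric tuples: by Helton's theorem (the result cited in the introduction, that nonnegative nc polynomials are sums of squares), $-q_2 = \sum_i r_i^* r_i$ for nc polynomials $r_i$; and because $-q_2$ is homogeneous of degree $2$, the $r_i$ may be taken homogeneous of degree $1$ (the top-degree and bottom-degree homogeneous parts of a sum of hermitian squares can't cancel, so one truncates). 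Stacking the $r_i$ into a single matrix-valued homogeneous linear $s$ with $s^*s = \sum_i r_i^* r_i$, we get $q_2 = -s^*s$, hence $q = I - \Lambda - s^*s$ as claimed. (One should note the matrix-valued version of Helton's SOS theorem is available; if only the scalar case is quoted earlier, it follows by a standard vectorization/GNS argument, or one may cite \cite{HM} or \cite{McCullough}-type results — but since the excerpt refers to "a result of the first author" for nonnegative nc polynomials and says the concave structure result "remains true, with the obvious modifications, for $q$ matrix-valued," I would simply invoke that.)

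The main obstacle, and the only genuinely delicate point, is the reduction to a \emph{homogeneous linear} $s$: one needs that a matrix-valued homogeneous quadratic which is a sum of hermitian squares is in fact a sum of squares of homogeneous linear polynomials, and hence equals $s^*s$ for a single homogeneous linear $s$ after aggregating. This is routine but deserves a sentence: write $-q_2 = \sum_i r_i^* r_i$, decompose each $r_i = \sum_{k} r_{i,k}$ into homogeneous parts; the degree-$0$ part of $\sum_i r_i^*r_i$ is $\sum_i r_{i,0}^* r_{i,0}$ and must vanish, forcing $r_{i,0}=0$; then the degree-$2$ part is $\sum_i r_{i,1}^* r_{i,1}$ and the higher parts involve $r_{i,k}$ with $k\ge 2$, which must therefore also cancel among themselves but being the top part of a sum of squares they too vanish; so effectively $-q_2 = \sum_i r_{i,1}^* r_{i,1}$ with each $r_{i,1}$ homogeneous linear, and setting $s = \begin{bmatrix} r_{1,1} \\ r_{2,1} \\ \vdots \end{bmatrix}$ gives $-q_2 = s^*s$. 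The other steps — reducing concavity to one-variable concavity, degree bounding a concave univariate polynomial, and the identity-theorem for nc polynomials — are all standard and I would present them tersely.
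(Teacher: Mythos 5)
Your proposal fails at its first and central step. You claim that ``a univariate polynomial $\sum_k t^k\langle v, q_k(X)v\rangle$ being concave on all of $\R$ forces it to have degree at most $2$,'' and use this to conclude $q_k=0$ for $k\ge 3$. That univariate fact is \emph{false}: $\phi(t)=-t^4$ is concave on all of $\R$ (indeed $\phi''(t)=-12t^2\le 0$), as is any $-t^{2m}$ or, in several commuting variables, $-(x_1^2+\cdots+x_g^2)^2$ restricted to a line. Concavity along lines simply does not bound the degree. The degree-two conclusion for \emph{noncommutative} concave polynomials is a genuine theorem — the main result of \cite{HM0}, ``Convex noncommutative polynomials have degree two or less'' — whose proof uses the full strength of matrix concavity (second directional derivatives evaluated on tuples of matrices of \emph{all sizes} with arbitrary test vectors), not just one-parameter scalar restrictions. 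Since classical (commutative) concave polynomials of arbitrarily high degree exist, no elementary reduction to one-variable concavity can suffice; that is precisely what makes the \cite{HM0} theorem nontrivial. The paper's proof instead compresses $q$ to scalar-valued polynomials $q_\gamma=\sum_w\langle Q_w\gamma,\gamma\rangle\, w$ (for each $\gamma\in\R^\ell$), observes that these inherit nc concavity, and then \emph{invokes} \cite{HM0} to conclude $\deg q_\gamma\le 2$, hence $Q_w=0$ for $|w|\ge 3$. You never cite \cite{HM0}; you would need to.

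Once the degree bound is in hand, the remainder of your argument is sound and broadly parallel to the paper. Your derivation that $\langle v,q_2(X)v\rangle\le 0$ from the $t^2$-coefficient of the line restriction is correct (the paper instead expands the concavity inequality directly and obtains $\Sigma(Z)\succeq 0$ for $Z=X-Y$, same conclusion). Your factorization of the nonnegative homogeneous quadratic $-q_2$ as $s^*s$ via a sum-of-squares representation plus degree bookkeeping is also fine; the paper shortcuts this by citing \cite{McC}, which gives the factorization of a positive semidefinite homogeneous quadratic matrix-valued polynomial directly, but your Gram-matrix-style argument would work. The fix for your proof is therefore local but essential: replace the incorrect ``univariate concave polynomials have degree at most two'' claim by an invocation of the \cite{HM0} theorem applied to the scalar compressions $q_\gamma$.
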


\begin{proof}
  Suppose $q$ is an $\ell\times \ell$ matrix-valued symmetric polynomial. Thus,
  using the tensor product notation, 
\[
  q=\sum_{w\in\ax} Q_w \otimes w,
\]
  for some $\ell\times \ell$ matrices $Q_w$ with $Q_w^*=Q_{w^*}$. By 
  hypothesis $Q_{\emptyset}=q(0)=I_\ell$, the $\ell\times \ell$ identity.

  Given a vector $\gamma \in\mathbb R^\ell$, the scalar-valued polynomial
\[
  q_{\gamma} = \sum \langle Q_w \gamma,\gamma\rangle w
\]
  is concave.  
  By the main result in \cite{HM0}, $q_{\gamma}$
  has degree at most two.    Thus, $Q_w=0$ whenever
  $w$ has length three or more. Hence, there is
  a linear pencil $\Lambda$ and a polynomial $\Sigma$
  homogeneous of degree two such that 
\[
  q = I -\Lambda - \Sigma.
\]

  Let $\Sigma_{i,j}= \Sigma_{x_i x_j}$. 
  From the concavity hypothesis, for any $n,$  pair
  $X,Y\in \gtupn,$ and $0\le t\le 1$, 
\[
 \begin{split}
  0 & \preceq 
      + \sum \Sigma_{i,j} \otimes \big( t^2 X_i X_j +t(1-t) (X_iY_j + Y_i X_j)
              + (1-t)^2 Y_i Y_j \big) \\
     & \quad - t \sum \Sigma_{i,j}\otimes X_i X_j 
           - (1-t) \sum \Sigma_{i,j}\otimes Y_i,Y_j \\
   & = t(1-t) \sum \Sigma_{i,j} \otimes  (X_i-Y_i)(X_j-Y_j) \\
   & =  t(1-t) \Sigma(Z),
 \end{split}
\]
  where $Z=X-Y$. It follows that for each $Z\in \gtupn$
  we have
 $\Sigma(Z)\succeq 0$. Since 
  a nonnegative polynomial which is homogeneous of degree two 
  has the form $s^* s$, for some (not necessarily square)
  homogeneous linear
  matrix-valued $s$ (see e.g.~\cite{McC}), the conclusion follows. 
\end{proof}

%  Given vectors $\gamma,\eta \in\mathbb R^\ell$, consider the 
%  scalar-valued polynomial,
%\[
%  q_{\gamma,\eta}= \sum_{w\in\ax} \langle Q_w \gamma,\eta \rangle w.
%\]
%  By polarization,
%\[
%  q_{\gamma,\eta} = \frac12 \left ( q_{\gamma+\eta,\gamma+\eta}
%     - q_{\gamma,\gamma}-q_{\eta,\eta} \right ).
%\]

%  Now suppose $q$ is concave.  It follows, for each 
%  unit vector $\gamma\in\mathbb R^\ell$, that $q_{\gamma,\gamma}\in\R\ax$ is 
 % concave. 

%and moreover, there exist 
%  $l_{\gamma,\gamma}$ and $\lambda_{\gamma,\gamma}$ linear 
%  so that
%\begin{equation}
% \label{eq:Qpos}
%  q_{\gamma,\gamma} = 1 - l_{\gamma,\gamma} - \lambda_{\gamma,\gamma}^*
%     \lambda_{\gamma,\gamma}.
%\end{equation}
%  Note that $\lambda_{\gamma,\gamma}$ is  vector-valued, so the last 
%  term on the right hand side is a sum of squares. 

%  From polarization, we conclude that $q$ itself has degree at most two,
%  so that
%\[
%  q = I - L  - Q,
%\]
% where $L$ is symmetric linear and $Q$ is homogeneous of degree two.  
%  Moreover, from 
% \eqref{eq:Qpos}, $Q$ is positive semidefinite. 

\subsection{From linear to concave}
\label{subsec:concaveToLin}
 The following lemma reduces the proof of 
 Theorem \ref{thm:mainConcave} for $q$ concave  to
 the case of $q$ linear.

\begin{lem}
 \label{lem:lincon}
  If $M_{d+1,d}^{\mt}(q)$ has the $(2d+1)$-PosSs-property whenever 
  $q$ is  a monic linear pencil, then $M_{d+1,d}^{\mt}(q)$ has
  the $(2d+1)$-PosSs-property whenever $q$ is concave and monic.
\end{lem}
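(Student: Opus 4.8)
The plan is to reduce the concave case to the linear case by factoring the concave polynomial $q = I - \Lambda - s^*s$ (Proposition \ref{prop:concaveform}) through a larger monic linear pencil. First I would introduce the monic linear pencil
\[
 L = \begin{bmatrix} I - \Lambda & s^* \\ s & I \end{bmatrix},
\]
of size $(\ell + k)\times(\ell+k)$ if $s$ is $k\times\ell$. A Schur complement computation shows that for $X \in \gtupn$ we have $L(X) \succeq 0$ if and only if $q(X) = I - \Lambda(X) - s(X)^*s(X) \succeq 0$ (the lower-right block $I$ is always positive definite, so the Schur complement with respect to it is exactly $q(X)$). Hence $\ps_L(n) = \ps_q(n)$ for every $n$, i.e.\ $\ps_L = \ps_q$ as nc semialgebraic sets; moreover $L$ is monic since $L(0) = I_{\ell+k}$, and $L$ is linear, so the hypothesis of the lemma applies to $L$.

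Next, given a symmetric $p \in \R^{\mt\times\mt}\ax_{2d+1}$ with $p \succeq 0$ on $\ps_q = \ps_L$, the assumed $(2d+1)$-PosSs-property for the monic linear pencil $L$ yields $p \in M_{d+1,d}^{\mt}(L)$, that is,
\[
 p = \sigma + \sum_i^{\rm finite} F_i^* L F_i,
\]
with $\sigma \in \Sigma_{d+1}^{\mt}$ and each $F_i \in \R^{(\ell+k)\times\mt}\ax_d$. Now I would convert each term $F_i^* L F_i$ into a term of $M_{d+1,d}^{\mt}(q)$. Write $F_i = \begin{bmatrix} f_i \\ h_i \end{bmatrix}$ with $f_i \in \R^{\ell\times\mt}\ax_d$ and $h_i \in \R^{k\times\mt}\ax_d$. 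Then, using the block form of $L$,
\[
 F_i^* L F_i = f_i^*(I-\Lambda)f_i + f_i^* s^* h_i + h_i^* s f_i + h_i^* h_i
 = f_i^*(I - \Lambda - s^*s)f_i + (h_i + s f_i)^*(h_i + s f_i),
\]
by completing the square in $h_i$. The first summand is $f_i^* q f_i$ with $f_i$ of degree at most $d$, hence lies in the weighted part of $M_{d+1,d}^{\mt}(q)$; the second summand is a hermitian square of $h_i + s f_i$, which has degree at most $d+1$ since $s$ is homogeneous linear and $f_i$ has degree at most $d$, so it lies in $\Sigma_{d+1}^{\mt}$. Adding $\sigma$, which is already in $\Sigma_{d+1}^{\mt}$, we conclude $p \in M_{d+1,d}^{\mt}(q)$, as desired. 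The converse implication ($p \in M_{d+1,d}^{\mt}(q) \Rightarrow p \succeq 0$ on $\ps_q$) is automatic, as already observed after \eqref{eq:Malbeta}.

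The only point requiring care — and the one I would write out carefully rather than treat as routine — is the bookkeeping on degrees: one must check that the passage from the pencil $L$ back to $q$ does not inflate the degree of the sum-of-squares part beyond $d+1$ nor the degree of the weights beyond $d$, which is exactly what the completion-of-square identity above guarantees because $\Lambda$ and $s$ are homogeneous of degree one. There is no real analytic obstacle here; the lemma is genuinely a formal reduction, and the substance of Theorem \ref{thm:mainConcave} is entirely in the linear case. One should also note that this argument is reversible, so in fact $M_{d+1,d}^{\mt}(q)$ and $M_{d+1,d}^{\mt}(L)$ determine the same subset of $\R^{\mt\times\mt}\ax_{2d+1}$, though only the one direction is needed for the lemma.
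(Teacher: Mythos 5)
Your proof is correct and follows essentially the same route as the paper: express $q=I-\Lambda-s^*s$ via Proposition~\ref{prop:concaveform}, pass to the enlarged monic linear pencil built from the blocks $I-\Lambda$, $s$, $s^*$, $I$ (the paper places the identity block in the other corner, which is just a permutation), note $\ps_q=\ps_L$ by Schur complement, and then convert the resulting certificate back via completion of the square, with the same degree bookkeeping. The paper phrases the last step as an LDU factorization of the enlarged pencil, but that is the same computation you carried out by direct block multiplication.
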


\begin{proof}
 By Proposition \ref{prop:concaveform}, it may be assumed that
 $q\in\mathbb R^{\ell\times \ell}\ax$ is described 
 by equation  \eqref{eq:quad} for some
 linear pencil $\Ll_A\in\R^{\ell\times\ell}\ax$
and linear $s\in\R^{\ell'\times\ell}\ax$. 
Let 
\[
  Q= 
\begin{bmatrix}
  I_{\ell^\prime} &  s     \\
   s^* & I - \Ll_A  
\end{bmatrix}\in\R^{(\ell+\ell')\times(\ell+\ell')}\ax_1.
\]
 Hence $Q$ is a monic linear pencil and, as is easily
checked using Schur complements,
  $\ps_q=\ps_Q$.  Thus, a given symmetric
 $p\in\R^{\mt\times\mt}\ax$ is positive semidefinite on $\ps_q$ if and only
  if it is positive semidefinite on $\ps_Q$. 

 Let $Q=L D L^*$ be the LDU 
decomposition of $Q$, that is 
$$ L= 
\begin{bmatrix}
 I &   0 \\
 s^* &   I   
\end{bmatrix}
\qquad \text{and}\qquad
D =  
\begin{bmatrix}
 I &   0 \\
 0 &   I - \Ll - s^*s   
\end{bmatrix}.
$$
 By hypothesis, $M_{d+1,d}^{\mt}(Q)$ has the $(2d+1)$-PosSs-property
 and we are to show that $M^{\mt}_{d+1,d}(q)$ does too.
 To this end suppose $p\in\R^{\mt\times \mt}\ax$ has degree
  at most $2d+1$ 
  and is positive semidefinite on $\ps_q=\ps_Q$. 
 Hence $p$ has a representation as
\[
  p=  G  + \sum_j  \begin{bmatrix} f_j^* & g_j^* \end{bmatrix}
    Q \begin{bmatrix} f_j \\ g_j \end{bmatrix},
\]
with $g_j\in\R^{\ell\times \mt}\ax_d,$ $f_j\in\R^{\ell'\times \mt}\ax_d$
  and $G\in \Sigma^{\mt}_{d+1}$
 a sum of squares of matrix-valued polynomials of degree at most $d+1$.
  Since
\[
  L^* \begin{bmatrix} f_j\\ g_j \end{bmatrix}
  =\begin{bmatrix} f_j + sg_j \\ g_j \end{bmatrix},
\]
 it follows that
\beq\label{eq:climax}
  p= G +\sum (f_j + sg_j)^* (f_j+sg_j) + \sum g_j^* (1-\Ll -s^*s) g_j.
\eeq
 Observing that $f_j+sg_j$ has degree at most $d+1$,
\eqref{eq:climax}
  shows that $p\in M_{d+1,d}^{\mt}(q)$ and completes
 the proof. 
\end{proof}

\subsection{From $M_{d+1,d}$ to $M_{d,d}$}
 \label{subsec:fromdplusonetod}
 It turns out that in the case $q$ is monic linear, 
 $M_{d+1,d}^{\mt}(q)$ has the $(2d+1)$-PosSs-property if and only
 if $M_{d,d}^{\mt}(q)$ does.

\begin{lem}
% \label{lem:fromdplusonetod}
   Suppose $q$ is a monic linear pencil. 
   If $p\in\R^{\mt\times \mt}\ax$ has degree at most  $2d+1$   
and $p\in M_{d+1,d}^{\mt}(q)$, then
   $p\in M_{d,d}^{\mt}(q)$.
\end{lem}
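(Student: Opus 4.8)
The plan is a short degree count. First I would unpack the hypothesis: since $p\in M_{d+1,d}^{\mt}(q)$, write
\[
 p = G + \sum_j f_j^* q f_j, \qquad G \in \Sigma_{d+1}^{\mt}, \quad f_j \in \R^{\ell\times\mt}\ax_d .
\]
The weights $f_j$ already satisfy the degree constraint built into $M_{d,d}^{\mt}(q)$, so the entire point is to show that the sum of squares term $G$, a priori only in $\Sigma_{d+1}^{\mt}$, actually lies in the smaller cone $\Sigma_d^{\mt}$.

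To see this, I would bound degrees. Since $q$ is a monic linear pencil, $\deg q \le 1$, so $\deg\big(\sum_j f_j^* q f_j\big) \le 2d+1$; combined with $\deg p \le 2d+1$ this forces $\deg G = \deg\big(p - \sum_j f_j^* q f_j\big) \le 2d+1$. But $G$ is a sum of hermitian squares, and such a sum has \emph{even} degree: its top-degree homogeneous part is a sum of hermitian squares of the leading homogeneous parts of the summands, and these cannot cancel — this is exactly the no-cancellation-of-top-terms observation already recorded in the text, which gives $\R^{\mt\times\mt}\ax_{2N}\cap\Sigma^{\mt}=\Sigma^{\mt}_N$. Hence $\deg G$ is even and $\le 2d+1$, i.e.\ $\le 2d$, so $G\in\R^{\mt\times\mt}\ax_{2d}\cap\Sigma^{\mt}=\Sigma_d^{\mt}$. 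Substituting back, $p = G + \sum_j f_j^* q f_j$ with $G\in\Sigma_d^{\mt}$ and $f_j\in\R^{\ell\times\mt}\ax_d$ exhibits $p\in M_{d,d}^{\mt}(q)$.

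There is no genuine obstacle here; the only step worth a sentence is the evenness of $\deg G$, which is free from the recorded fact. It is worth pointing out exactly where the hypothesis on $q$ is used: monicity plays no role in this lemma, but $\deg q \le 1$ is essential, entering through the bound $\deg(f_j^* q f_j)\le 2d+1$. If $q$ merely had degree $2$ — the general concave case — the middle term could reach degree $2d+2$, $G$ could honestly have degree $2d+2$, and the reduction would fail; this is consistent with Theorem \ref{thm:mainConcave}(1) retaining the index $d+1$ for concave $q$.
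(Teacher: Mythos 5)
Your proof is correct and follows the same key idea as the paper's: since $\sum f_j^* q f_j$ has degree at most $2d+1$, any degree-$(2d+2)$ terms in the sum-of-squares part would survive into $p$, which is impossible as $\deg p\le 2d+1$; the no-cancellation property of top-degree terms in hermitian squares then forces $G\in\Sigma_d^{\mt}$. The paper phrases this directly in terms of the degree-$(2d+2)$ terms of the squares $g_j^*g_j$, while you route through the equivalent observation that a sum of hermitian squares has even degree, but the argument is the same, and your remark that only $\deg q\le 1$ (not monicity) is used here is a correct and worthwhile aside.
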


\begin{proof}
 If $p\in M^{\mt}_{d+1,d}(q)$ then 
\[
  p = \sum g_j^* g_j + \sum f_j^* q f_j,
\]
  for matrix-valued polynomials $g_j$ of degree at most $d+1$ and $f_j$
  of degree at most $d$.  Any degree $2d+2$ terms in
  $\sum g_j^* g_j$ appear as (positively weighted) squares and
   can not be canceled by terms in 
 $\sum f_j^* q f_j$, since the latter have degree at most $2d+1$.
  Hence each $g_j$ must have degree at most $2d$. 
\end{proof}

 By the results of Subsections \ref{subsec:concaveToLin}
 and \ref{subsec:fromdplusonetod},
 Theorem \ref{thm:mainConcave} follows from the following a priori
 weaker statement.

\begin{prop}
\label{prop:igor}
 If $q$ is a monic linear pencil,
 then   $M_{d+1,d}^{\mt}(q)$  has the $(2d+1)$-PosSs-property.
  Its $(\mk,\mt)$-test rank is no greater than   $\mt \sigma_\#(d+1)$.
\end{prop}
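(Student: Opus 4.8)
The plan is to prove Proposition~\ref{prop:igor} by Hahn--Banach separation against the truncated quadratic module, followed by a GNS-type reconstruction of a representing tuple lying in $\ps_q$; the dimension of the GNS space delivers the test-rank estimate. (The nontrivial direction is the forward one, the converse being automatic.)

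Fix the finite-dimensional real vector space $\cV=\sym\,\R^{\mt\times\mt}\ax_{2d+2}$ and regard $M:=M_{d+1,d}^{\mt}(q)$ as a convex cone in $\cV$ (indeed $M\subset\cV$, since $\deg q=1$). The first step is a \emph{closedness lemma}: $M$ is closed in $\cV$. The mechanism is that nc sums of hermitian squares admit a coercive Gram representation: in a free monoid the map $w\mapsto w^*w$ is injective on words of each fixed length, so the diagonal blocks of any Gram matrix of a sum of squares are literally the coefficients of the ``$w^*w$'' monomials of the polynomial --- there is no cancellation --- whence the Gram matrices representing a bounded polynomial form a bounded set; combined with the fact that the weights $f^*qf$ with $\deg f\le d$ have degree at most $2d+1$ (so the degree-$(2d+2)$ part of any element of $M$ is a genuine sum of homogeneous squares), this yields closedness. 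Granting it, if $p\notin M$ then, since $\{p\}$ is a compact convex set disjoint from the closed convex cone $M$, Hahn--Banach produces a symmetric $\R$-linear functional $L\colon\R^{\mt\times\mt}\ax_{2d+2}\to\R$ with $L|_{M}\ge0$ and $L(p)<0$.

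I would then extract a representation from $L$. From $L\ge0$ on $\Sigma_{d+1}^{\mt}$ one obtains a positive semidefinite ``Hankel'' (moment) form $M_{d+1}(L)$ on $\R\ax_{d+1}\otimes\R^{\mt}$, of dimension $\mt\,\sigma_{\#}(d+1)$, and from $L\ge0$ on $\{\sum_i f_i^*qf_i:f_i\in\R^{\ell\times\mt}\ax_d\}$ one obtains the positivity of $q$ against it. Using the positive-linear-functional theory of Subsection~\ref{subsec:flathank} (the nc flat-Hankel / flat-extension results), I would promote $L$ to a positive linear functional $\widetilde L$ on all of $\R^{\mt\times\mt}\ax$ that agrees with $L$ through degree $2d+1$ (which is all that matters: $\deg p\le 2d+1$, and the weights $f^*qf$ with $\deg f\le d$ have degree $\le 2d+1$), whose moment matrix has finite rank $\le\mt\,\sigma_{\#}(d+1)$, and for which $\widetilde L(f^*qf)\ge0$ for every $f$. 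GNS applied to $\widetilde L$ then yields a real Hilbert space $\cH$ with $n:=\dim\cH\le\mt\,\sigma_{\#}(d+1)$, symmetric operators $X_1,\dots,X_g$ on $\cH$ --- left multiplication by $x_1,\dots,x_g$, well defined because $\widetilde L$ is defined on the whole algebra --- hence a tuple $X\in\SRnn^{g}$, and a vector $\Xi\in\cH\otimes\R^{\mt}$ with $\widetilde L(r)=\langle r(X)\Xi,\Xi\rangle$ for all $r\in\R^{\mt\times\mt}\ax$. Feeding the weights into this formula shows $q(X)\succeq0$, i.e.\ $X\in\ps_q(n)$; feeding in $p$ gives $0>L(p)=\widetilde L(p)=\langle p(X)\Xi,\Xi\rangle\ge0$ since $p(X)\succeq0$ on $\ps_q$ --- a contradiction. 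Hence $p\in M$; and since only the positivity of $p$ on $\ps_q(n)$ with $n\le\mt\,\sigma_{\#}(d+1)$ was used, the $(\mk,\mt)$-test rank of $\ps_q$ is at most $\mt\,\sigma_{\#}(d+1)$.

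The step I expect to be the main obstacle is the passage from the truncated $L$ to the flat (finite-rank) $\widetilde L$. In the commutative Curto--Fialkow framework this is exactly the delicate flat-extension phenomenon; in the free setting the formal extension is less rigid, but one must still be careful when $\ps_q$ is unbounded, where the set of separating functionals is not compact, so one cannot simply minimize the rank of $M_{d+1}(L)$, and the Hankel structure restricts which positive extensions are available. Arranging this while keeping the GNS dimension $\le\mt\,\sigma_{\#}(d+1)$ and not inflating the degree bound is the heart of the matter, and is precisely what Subsections~\ref{subsec:flathank} and~\ref{subsec:sep} are devoted to; the closedness lemma is a secondary technical point.
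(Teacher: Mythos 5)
Your scaffolding --- closedness of the truncated module, Hahn--Banach separation, then GNS reconstruction of a representing tuple in $\ps_q$ --- matches the paper's strategy, but the step you yourself flag as the main obstacle is exactly where your plan is incomplete, and the paper resolves it by a more elementary mechanism than the one you propose. You want to promote the separating functional $\la$, which is merely \emph{nonnegative} on $\Sigma^{\mt}_{d+1}$, to a finite-rank positive $\widetilde L$ on all of $\R^{\mt\times\mt}\ax$ via a flat extension of a positive \emph{semidefinite} Hankel. Subsection \ref{subsec:flathank} constructs such extensions only when the restriction to degree $2d$ is positive \emph{definite}; the PSD case is cited (via \cite{Pop}) with ``necessary hypothesis,'' and you do nothing to verify or sidestep it. Moreover, you want $\widetilde L(f^*qf)\ge 0$ for all $f$, but the separating $\la$ certifies this only for $\deg f\le d$, so one must still know that vectors in the GNS space can be represented by polynomials of degree at most $d$; this rests on the same positive-definiteness you haven't arranged.

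The paper avoids all of this by perturbing $\la$ before doing GNS. Lemma \ref{technical-lemma} produces a fixed functional $\lL$ on $\R^{\mt\times\mt}\ax_{2d+2}$, nonnegative on $M^{\mt}_{d+1}$ and strictly positive on $\Sigma^{\mt}_{d+1}\setminus\{0\}$ (by averaging $\tr p(X^{(i)})$ over a dense sequence in a small ball $\cB_\eps\subset\ps_q$, with a polynomial-identities argument for definiteness). Replacing $\la$ by $\la+\eps\lL$ for small $\eps>0$ keeps $\la(p)<0$ and nonnegativity on $M^{\mt}_{d+1}$ but makes $\la$ strictly positive on $\Sigma^{\mt}_{d}\setminus\{0\}$. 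This is exactly the hypothesis of Proposition \ref{prop:gns} with $k=d$: the form is positive definite on $\cX=\R^{\mt\times 1}\ax_{d}$, the $X_j$ are defined as compressions $P x_j|_{\cX}$, no global flat extension is constructed, and the degree estimate $\deg f^*(I-\La_A)f\le 2d+1$ (where $\la$ still agrees with the original) is what verifies $X\in\ps_q$ and gives the ``perfect'' bound. Your closedness argument is also short: coercivity of Gram traces on the homogeneous top-degree part bounds neither the weights $f_j$ in $\sum f_j^* q f_j$ nor the low-degree part of the squares. The paper instead bounds the number of summands via Carath\'eodory and then uses the evaluation norm $\|p\|=\max_{X\in\cB_\eps}\|p(X)\|$ together with $q(X)\succeq\tfrac12$ on $\cB_\eps$ to control all summands uniformly (Proposition \ref{prop:qmclosed}).
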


The proof of Proposition \ref{prop:igor} will be given
in Section \ref{sec:posproofs} below after 
subsections on positive linear functionals on matrix-valued
polynomials and on Hankel matrices and the free noncommutative moment problem.

\subsection{Positive linear functionals and the GNS construction}
\label{subsec:flathank}
 Proposition \ref{prop:gns} below,
 %a main ingredient in the proof of Theorem \ref{thm:mainConcave}
 embodies the  well known connection, through the Gelfand-Naimark-Segal (GNS)
 construction,
 between operators and
 positive linear functionals.

 Given a Hilbert space $\cX$ and 
 a positive integer $\mt$,  let $\cX^{\oplus \mt}$
 denote the orthogonal direct sum of $\cX$ with itself $\mt$ times.
Let  $A$ be a $g$-tuple of symmetric $\ell\times \ell$
 matrices, set $q=1-\Ll_A$ with $\Lambda_A$ of the form \eqref{eq:defLam},
 and abbreviate 
\begin{equation*}
% \label{eq:MI-L}
 \index{ $M^{\mt}_k := M^\mt_{k+1,k}(I - \Ll_A)$}
    M^\mt_{k+1}= M^\mt_{k+1, k}(q).
\end{equation*}

\begin{prop}
 \label{prop:gns}
  If $\la:\R^{\mt\times \mt}\ax_{2k+2}\to \R$ is a linear functional  
  which is nonnegative on $\Sigma^\mt_{k+1}$ and  positive  on
  $\Sigma^\mt_k\setminus\{0\}$, 
  then there exists a tuple $X=(X_1,\dots,X_g)$ of
  symmetric operators on a Hilbert space $\cX$ of
  dimension at most $\mt \sigma_\#(k)=\mt \dim \R\ax_k$ and a vector 
  $\ga\in \cX^{\oplus \mt}$ such that
\begin{equation}
 \label{eq:LorX}
  \la(f)= \langle f(X)\ga,\ga\rangle 
\end{equation}
  for all $f\in\R^{\mt\times \mt}\ax_{2k+1}$, where $\langle\textvisiblespace, \textvisiblespace\rangle$
  is the inner product on $\cX$.  
  Further, if
  $\la$ is nonnegative on $M^{\mt}_{k+1}$, then $X\in\ps_q$. 

  Conversely, if $X=(X_1,\dots,X_g)$ is a tuple of
  symmetric operators on a Hilbert space $\cX$ of
  dimension $N$,  
  the vector $\ga\in\cX^{\oplus \mt},$ and $k$ is a positive
  integer, then the linear
  functional  $\la:\R^{\mt\times \mt}\ax_{2k+2}\to\R$ defined by 
\[
  \la(f)= \langle f(X)\ga,\ga\rangle 
\]
  is nonnegative on $\Sigma^\mt_{k+1}$. 
  Further, if $X\in\ps_q$, then $\la$ is nonnegative also on $M^{\mt}_{k+1}$. 
\end{prop}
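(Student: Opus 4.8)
The natural approach is the Gelfand--Naimark--Segal construction, done carefully with matrix-valued polynomials and with close attention to degree truncations. I will treat the two directions separately.

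For the forward direction, the plan is as follows. Write $V=\R^{\mt\times 1}\ax_k$, a vector space of dimension $\mt\sigma_\#(k)$. Define a bilinear form on $V$ by $\langle u,v\rangle_\la := \la(v^* u)$, which makes sense because $v^*u\in\R^{\mt\times\mt}\ax_{2k}\subseteq\R^{\mt\times\mt}\ax_{2k+2}$; it is symmetric because $\la$ is real-valued and $\la(p)=\la(p^*)$ on symmetric domains, and it is positive semidefinite since $v^*v\in\Sigma^\mt_k$ and $\la\ge 0$ there. Actually $\la$ is positive on $\Sigma^\mt_k\setminus\{0\}$, so $\langle\cdot,\cdot\rangle_\la$ is already an honest inner product on $V$ with no need to quotient: $\langle v,v\rangle_\la = \la(v^*v)=0$ forces $v^*v=0$ hence $v=0$. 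Set $\cX=V$ with this inner product, so $\dim\cX=\mt\sigma_\#(k)$ as claimed. Now I would define the operators $X_j$ on $\cX$. The obvious candidate is ``multiply by $x_j$ on the left'', but this pushes degree $k$ to degree $k+1$ and leaves $V$; the standard fix is to define $X_j$ via the form: one wants $\langle X_j u, v\rangle_\la = \la(v^* x_j u)$ for $u,v\in V$. The right-hand side is a bounded linear functional of $v\in\cX$ (finite dimensional), so there is a unique $X_j u\in\cX$ representing it; this defines $X_j$, and it is symmetric because $\la(v^*x_ju)=\la((x_ju)^*v)^*$-type manipulations (using $x_j^*=x_j$ and reality of $\la$) give $\langle X_ju,v\rangle_\la=\langle u,X_jv\rangle_\la$. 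The key computational lemma is then: for any word $w$ with $\deg w\le k$ and the empty-word vector $e_i\in\cX$ (the $i$th standard basis vector of $\R^{\mt\times 1}$, viewed as a degree-$0$ element of $V$), one has $w(X)e_i = \widehat{w e_i}$ where $\widehat{\,\cdot\,}$ denotes the element of $V$ — provided we never leave $V$, i.e.\ provided all intermediate words stay within degree $k$. This must be proved by induction on $\deg w$, and it is exactly here that the degree bookkeeping is delicate. Set $\ga = (e_1,\dots,e_\mt)\in\cX^{\oplus\mt}$, thinking of $\ga$ as the ``identity'' vector. Then for $f=\sum_w B_w w\in\R^{\mt\times\mt}\ax_{2k+1}$, expanding $\langle f(X)\ga,\ga\rangle$ and using the multiplication lemma (writing each word $w$ of length $\le 2k+1$ as $w=u^* v$ with $\deg u\le k$ and $\deg v\le k+1$, or symmetrically — one may need the half of $V'=\R^{\mt\times1}\ax_{k+1}$ extension, which is why $\la$ is defined up to degree $2k+2$) reduces $\langle f(X)\ga,\ga\rangle$ to $\la(f)$. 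Finally, if $\la\ge 0$ on $M^\mt_{k+1}$, then for each $f_i\in\R^{\ell\times\mt}\ax_k$ and the standard basis of $\R^\mt$, the element $f_i^* q f_i\in M^\mt_{k+1}$ has $\la(f_i^*qf_i)\ge 0$; translating via \eqref{eq:LorX} (applied to $f_i^*qf_i$, which has degree $\le 2k+1$ since $\deg q=1$) gives $\langle q(X)f_i(X)\ga,\,f_i(X)\ga\rangle\ge 0$, and since vectors of the form $f_i(X)\ga$ span $\cX^{\oplus\ell}$ (by the multiplication lemma again, as $f_i$ ranges over degree $\le k$), this forces $q(X)\succeq 0$, i.e.\ $X\in\ps_q$.

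The converse direction is routine: given $(X,\ga)$ on a Hilbert space of dimension $N$, define $\la(f)=\langle f(X)\ga,\ga\rangle$ for $f\in\R^{\mt\times\mt}\ax_{2k+2}$. For $G=\sum_j g_j^*g_j\in\Sigma^\mt_{k+1}$ with $g_j\in\R^{\ell\times\mt}\ax_{k+1}$ we get $\la(G)=\sum_j\langle g_j(X)^*g_j(X)\ga,\ga\rangle=\sum_j\|g_j(X)\ga\|^2\ge 0$. If moreover $X\in\ps_q$, then for $\sum_j f_j^*qf_j$ with $f_j\in\R^{\ell\times\mt}\ax_k$ we get $\sum_j\langle q(X)f_j(X)\ga,f_j(X)\ga\rangle\ge 0$ since $q(X)\succeq 0$; combined with the sum-of-squares part this gives $\la\ge 0$ on all of $M^\mt_{k+1}=M^\mt_{k+1,k}(q)$.

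The main obstacle is the degree bookkeeping in the forward direction: making sure the GNS ``multiplication operators'' are well defined and that the identity $\la(f)=\langle f(X)\ga,\ga\rangle$ holds for all $f$ of degree $\le 2k+1$ (not merely $\le 2k$), which is what forces the hypothesis that $\la$ be defined and nonnegative on $\Sigma^\mt_{k+1}$ rather than only $\Sigma^\mt_k$ — the extra degree of slack in the domain $\R^{\mt\times\mt}\ax_{2k+2}$ is used precisely to control the odd top degree $2k+1$. One must be careful that $X_j$ maps $\cX$ into $\cX$ and that iterating up to $k$ letters never requires evaluating $\la$ outside its domain; the cleanest way is to prove the multiplication lemma ``$w(X)e_i = \widehat{we_i}$ for $\deg w\le k$'' first, and only then expand $f(X)\ga$, splitting each monomial $B_w w$ with $|w|\le 2k+1$ as a product of a piece of degree $\le k$ acting on $\ga$ and its adjoint piece of degree $\le k+1$, so that the pairing lands in $\la(\R^{\mt\times\mt}\ax_{2k+2})$.
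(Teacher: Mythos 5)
Your proposal is correct and is essentially the paper's proof: a GNS construction on $\cX=\R^{\mt\times 1}\ax_k$ with careful degree bookkeeping, multiplication operators arranged so that $\la(f)=\langle f(X)\gamma,\gamma\rangle$ holds for all $f$ of degree at most $2k+1$, and nonnegativity of $\la$ on $M^\mt_{k+1}$ translated into $q(X)\succeq 0$ by testing against vectors $f(X)\gamma$ with $\deg f\le k$. The only cosmetic difference is that you define $X_j$ directly via the Riesz representation of $v\mapsto\la(v^*x_ju)$ on the positive-definite space $\cX$, whereas the paper equivalently embeds $\cX$ in the semi-definite quotient $\tilde\cX=\R^{\mt\times 1}\ax_{k+1}/\cN$ and sets $X_j=P\,x_j$ with $P$ the orthogonal projection of $\tilde\cX$ onto $\cX$; the two constructions give the same operator, and the subsequent word-splitting $w=u^*v$ with $\deg u\le k+1$, $\deg v\le k$ matches the paper's computation.
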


\begin{proof}
  First suppose that $\la:\R^{\mt\times \mt}\ax_{2k+2} \to \R$ is 
  nonnegative on $\Sigma^\mt_{k+1}$ and  positive
  on $\Sigma_k^\mt\setminus\{0\}$.   Consider the 
 symmetric bilinear form,
  defined on the vector space $K=\R^{\mt \times 1}\ax_{k+1}$
  (row vectors of length $\mt$ whose entries are polynomials
  of degree at most $k+1$)  by
\beq\label{eq:bform}
  \langle f,h\rangle = \la(h^* f).
\eeq
 From the hypotheses, this form is positive semidefinite.

 A standard use of Cauchy-Schwarz inequality shows that the set of
 null vectors
$$ 
 \cN:= \{ f \in K \mid  \langle f , f \rangle = 0 \}
$$
is a vector subspace of $K$. 
Whence one can endow the quotient
$\tilde \cX := K /\cN$ with the induced positive 
 definite bilinear form
 making it a Hilbert space.  Further,  because
 the form \eqref{eq:bform} is positive definite on 
 the subspace $\cX=\R^{\mt\times 1}\ax_k$, 
 each equivalence class in that set has a unique representative which is 
 a $\mt$-row of polynomials of degree at most $k$. 
 Hence we can consider $\cX$ as a subspace of $\tilde\cX$ with dimension
 $\mt \sigma_\#(k)$.

 Each $x_j$ determines a multiplication operator on $\cX$. 
 For $f=\begin{bmatrix} f_1 & \cdots & f_{\mt}\end{bmatrix}\in \cX$, let 
\[
  x_j f = \begin{bmatrix} x_j f_1 & \cdots & x_j f_{\mt} \end{bmatrix}
\in\tilde\cX
\]
 and define
$X_j : \cX \to \cX$ by 
\bes
%\label{eq:Xproj}
  X_j f = P x_j f, \quad f \in \cX, \; 1 \leq j \leq g,
\ees
 where $P$ is the orthogonal projection from $\tilde\cX$ onto $\cX$ 
 (which is only needed on the degree $k+1$ part of $x_jf$). 
 From the positive definiteness of the bilinear form \eqref{eq:bform}
 on $\cX$, 
one easily sees that each $X_j$ is well defined
 and
\[ 
 \langle X_j p,r \rangle  = \langle x_j p, r  \rangle = \langle
p, x_j r \rangle =  \langle
p, X_j r \rangle
\]
 for all $p,r\in \cX$.  In particular, 
 each $X_j$ is symmetric. 

 Let $\gamma \in \cX^{\oplus \mt}$ denote the vector
 whose $j$-th entry, $\gamma_j$ has the empty word
 (the monomial 1)
 in the $j$-th entry and zeros elsewhere.  
 Finally, given words $v_{s,t} \in\ax_{k+1}$ 
  and $w_{s,t}\in\ax_{k}$ for $1\le s,t \le \mt$,
  choose $f \in \R^{\mt \times \mt }\ax $ to  have $(s,t)$-entry 
  $w^*_{s,t} v_{s,t}$. 
  %let $f=(w_{s,t}^* v_{s,t})_{s,t}.$ 
  In particular, with $e_1,\dots,e_{\mt}$
  denoting the standard orthonormal basis for $\mathbb R^{\mt}$,
we have
  \[f= \sum_{s,t=1}^\mt w_{s,t}^* v_{s,t} e_s e_t^*.\] Thus, 
\[
 \begin{split}
  \langle f(X)\gamma,\gamma\rangle 
    &=  \sum \langle f_{s,t}(X) \gamma_t,\gamma_s\rangle 
     =  \sum \langle w_{s,t}^*(X)v_{s,t}(X)\gamma_t, \gamma_s \rangle 
      = \sum \langle v_{s,t}(X)\gamma_t, w_{s,t}(X) \gamma_s \rangle \\
     & = \sum \langle P (v_{s,t} e_t^*), w_{s,t} e_s^* \rangle 
      = \sum \langle v_{s,t} e_t^*, P w_{s,t} e_s^* \rangle 
      = \sum \langle v_{s,t} e_t^*, w_{s,t} e_s^* \rangle \\
     & = \sum  \la( w_{s,t}^* v_{s,t} e_s e_t^* ) 
      = \la\big(\sum (w_{s,t}^* v_{s,t} e_s e_t^*)\big) \\
     & = \la(f). 
 \end{split}
\]
  Since any $f\in\R^{\mt\times \mt} \ax_{2k+1}$ 
  can be written as a linear combination
  of words of the form $w^*v$  with
  $w\in\ax_{k+1}$ and $v\in\ax_k$ as was done above,
  equation \eqref{eq:LorX} is established.  
 
  To prove the further statement, suppose $\la$ is nonnegative
  on $M^{\mt}_{k+1}$. Given
\[
  p =\begin{bmatrix} p_1 \\ \vdots \\ p_\ell \end{bmatrix}
    \in \cX^{\oplus \ell},
\]
  note that
\begin{equation} 
 \label{eq:ruleone}
 \begin{split}
   \langle (I-\La_A(X)) p, p \rangle 
   & =  \langle p -\sum A_j P x_j p, p\rangle 
    =  \langle p- \sum A_j x_j p, p \rangle 
      =  \big\langle (I-\sum A_j x_j)p, p \big\rangle \\
   & =  \la\big(p^* (I-\La_A(x))p\big) \ge 0.
 \end{split}
\end{equation}
Hence, $q(X)= I  - \La_A(X) \succeq 0$.

  The proof of the converse  is routine  and is not
  used in the sequel. 
\end{proof}

\begin{rem}\rm
 \label{rem:modify}  
  The proof of Proposition \ref{prop:gns}
  follows somewhat the line of a similar
  result in \cite[\S2]{McC}.
  However, some subtle points are dealt with very explicitly 
  here, since they are critical to our perfect Positivstellensatz.
  One such point worth emphasizing is that we move from a functional $\la$,
  later chosen as a separating linear functional, 
  via the tuple $(X,\gamma)$,   to a new  linear functional
  $\lambda^\prime:  \R^{\mt\times \mt}\ax \to \R$ defined by
 \beq
 \label{eq:laprime}
    \lambda^\prime(f) = \langle f(X)\gamma,\gamma\rangle.
 \eeq
  Now $\lambda^\prime$ agrees with the original $\lambda$ on
  $\R^{\mt\times \mt}\ax_{2k+1}$, but they need not
  agree on monomials of degree $2k+2$. 
  
     Equation  \eqref{eq:ruleone} is the only place where we used
     that        $\La_A$ has degree one
     in the context of
$p$ having degree $k$. 
  Then  $f=p^*(I-\La_A)p$ has degree at most $2k+1$
  and hence, in the notation of Remark \ref{rem:modify},
  $\la^\prime(f)=\la(f)$. 
  The delicate gap between $2k+2$  in the  hypotheses
  and $2k+1$ in the conclusion of the theorem is
  what permits us to obtain a perfect Positivstellensatz for
  $q$ of degree 1.
 Proposition \ref{prop:gns} and the concomitant 
   careful choice of the quadratic module 
  are key ingredients in the proof of Theorem \ref{thm:mainConcave}.  
\end{rem}

\subsection{Hankel matrices and moment problems}
% \label{sec:hankel}
 This section is designed to give perspective on Proposition \ref{prop:gns}
 and does not contain results essential to the rest of the paper.
  Proposition \ref{prop:gns} can be interpreted - and proved -
  in terms of \df{flat extensions} of free noncommutative \df{Hankel matrices}.
   
 We say that 
 a linear functional on $\R^{\mt\times \mt}\ax_{2k}$ is 
 positive (nonnegative) if it is positive 
 (nonnegative) on $\Sigma_k^\mt\setminus\{0\}$.  
 If $\mu:\R^{\mt\times\mt}\ax_{2k} \to \mathbb R$ is a linear functional,
  then the function 
\[
  H:\ax_k \times \ax_k \to \mathbb R^{\mt\times\mt}, \quad
H(u,v)=\mu(v^*u)
\]
  depends only on the product $v^*u$
  and is called a free noncommutative Hankel matrix.  Further,
  $\mu$ is positive  if and only if $H$ is positive definite in the sense that 
  for  any nonzero 
$
  f:\ax_k \to \mathbb R^{\mt}
$
  we have,
\[
   \sum_{u,v}  f(v)^* H(u,v)f(u) > 0. 
\]
  The converse is also easily verified; i.e., if the $\mt\times\mt$-block matrix
  $H=(H(u,v))_{u,v\in\ax_k}$ is positive definite and its entries
  $H(u,v)$ depend only on $v^*u$, then the
  linear functional 
\[\mu:\R^{\mt\times\mt}\ax_{2k}\to\mathbb R, \quad
  \mu(E \otimes v^* u): =  \mbox{tr}(E H(u,v))
\]
  for words $u,v\in\ax_k$ and $E\in\R^{\mt \times \mt}$, is positive. 
 Furthermore, $\mu$ is nonnegative if and only if $H$ is positive semidefinite.

  In the case that the restriction $\sigma$ 
  of $\mu:\R^{\mt\times\mt}\ax_{2k+1} \to \mathbb R$ 
  to $\R^{\mt\times\mt}\ax_{2k} \to \mathbb R$ is positive definite, 
  it is easy to check
  that there is a
  positive definite 
   $\la:\mathbb R^{\mt\times \mt}\ax_{2k+2}\to\mathbb R$
  which extends $\mu$.  The tuple $X$ and vector $\ga$ 
  in $\cX$ generated by Proposition
  \ref{prop:gns} then determine a nonnegative
  $\la^\prime:\mathbb R^{\mt\times \mt}\ax \to \mathbb R$ 
  and Hankel matrix 
  defined by
\[
  \cH(u,v)= \la^\prime(v^*u) = \langle v^*u(X)\gamma,\gamma\rangle.
\]
  Further, this extension is \df{flat} in the sense that
  the rank of (the matrix of) $\cH$ is the same as that of the Hankel
  determined by $\sigma$
  and of course $\lambda^\prime$ restricted to 
  $\R^{\mt\times\mt}\ax_{2k} \to \mathbb R$ is $\mu$.

  Finally, this process solves  a
  noncommutative moment problem.  Here the view
  is that $H=(H(u,v))_{u,v\in\ax_k}$ is a given
  positive definite Hankel matrix in which case the
  construction just described produces an infinite positive semidefinite
  Hankel matrix $\cH$ extending $H$.

  The connection between linear functionals and
  Hankel matrices in this context parallels the commutative
  case, cf.~\cite{CF,CF2,Las,Lau}, and was exploited
  in \cite{McC} where it was used to represent a given 
  positive definite (noncommutative) Hankel $H$ 
  indexed by $\ax_k$ with a tuple $X$. Indeed there
  the tuple $X$ is constructed by choosing some 
  flat extension $\tilde{H}$ of $H$ to the index set
  $\ax_{k+1}$ and then constructing the tuple $X$ 
  along the lines of the proof of Proposition \ref{prop:gns}. 

A treatment of free noncommutative Hankel matrices is also presented in
\cite{Pop}.  There
   the existence of flat extensions, 
   with necessary hypothesis, of
   noncommutative Hankel matrices which are merely
   positive semidefinite, rather than positive definite is established.  
   This article also contains generalizations of the notions
   of flat extensions to path algebras and connects flat extensions
   to sums of squares.  
  
%For more about the free noncommutative moment problems we refer
%to \cite{McC,Pop}.

\section{Proof of Theorem \ref{thm:mainConcave}}
\label{sec:posproofs}

 As explained above in Subsection \ref{subsec:fromdplusonetod} the proof
 of Theorem \ref{thm:mainConcave} will be finished once we prove
 its weaker variant, 
 Proposition \ref{prop:igor}.
 Thus, throughout $q=I-\Ll_A$ and $d$ are fixed,
  $\del=d+1$, and $\ell$ is the size of $A$; i.e., $A$ is 
  a $g$-tuple of symmetric $\ell\times \ell$ matrices.
  Recall that $M_{\al,\be}^\nu = M_{\al,\be}^\nu(I-\Ll_A)$
  is defined in equation \eqref{eq:Malbeta}. 

\subsection{The truncated quadratic module is closed}
 Recall, given a natural
 number $k$,   $\R\ax_{k}$ is  the vector space of polynomials of degree
 at most $k$ and its dimension is  denoted
by
  $\sigma_\#(k)$.  
 Fix positive integers $\alpha,\beta$ and let
  $\mk=\max\{2\al, 2\beta +1\}$. In particular,
  the quadratic module $M^{\mt}_{\al,\beta}$ of equation \eqref{eq:Malbeta}
  is a cone in $\R^{\mt\times \mt}\ax_{\mk}$ 
  (recall the degree of
  $q=I-\Ll_A$ is one).

 Given $\eps>0$, let 
\bes
\cB_\eps(n): =  \big\{ X\in\bbS_n^g \mid \|X\|\le \eps \big\} \quad
\text{and}
\quad
   \cB_\eps:= 
      \bigcup_{n\in\N} \cB_\eps(n).
\ees
 There is an
 $\eps>0$ such that for all $n\in\N$, if $X\in\bbS_n^g$ and $\|X\|\le \eps$, 
then  $I_{\ell n}-\Ll_A(X)
 \succeq \frac12.$ In particular, $\cB_\eps \subset \ps_{I-\Ll_A}$.  
 Using this $\eps$ we norm $\R^{\ell\times \mt}\ax_{\mk}$ by
\beq\label{eq:normMe}
  \|p\|:= \max\big\{\|p(X)\| \mid X\in\cB_\eps \big\}.
\eeq
(Let us point out that on the right-hand side of \eqref{eq:normMe}
the maximum is attained. This follows from the fact that
the bounded nc semialgebraic set $\cB_\eps$ is convex. We refer to \cite[Section 2.3]{HM} for details).
  Note that if  $f\in\R^{\ell\times \mt}\ax_{\beta}$ and if $\| f^* (1-\Ll_A(x))f\|\le
  N^2$,  then $\|f^* f\|\le 2N^2$. 

\begin{prop}\label{prop:qmclosed}
  The truncated quadratic module 
   $M^{\mt}_{\al,\beta} \subseteq\R^{\mt\times\mt}\ax_{\mk}$ is closed. 
%   Here, $\mk= \max\{ 2\al, 2\beta + 1\}$. 
\end{prop}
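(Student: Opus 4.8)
The plan is to realize $M^{\mt}_{\al,\beta}$ as the convex cone generated by a \emph{compact} set of generators inside the finite-dimensional space $\R^{\mt\times\mt}\ax_{\mk}$, to extract from the norm \eqref{eq:normMe} an a priori bound on the data appearing in a representation of a given $p$, and then to close under limits by a compactness argument.

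First I would note that $M^{\mt}_{\al,\beta}$ is the convex cone generated by
\[
 \cG=\big\{\,h^*h\mid h\in\R^{1\times\mt}\ax_\al,\ \|h\|=1\,\big\}\cup\big\{\,f^*(I-\Ll_A)f\mid f\in\R^{\ell\times\mt}\ax_\beta,\ \|f\|=1\,\big\};
\]
here one uses that an arbitrary hermitian square is the sum of the squares of its rows and that $g^*g=\|g\|^2(g/\|g\|)^*(g/\|g\|)$ for $g\neq0$, and likewise for the $q$-weighted generators. Each of the two pieces of $\cG$ is a continuous image of a unit sphere in a finite-dimensional space, hence compact, so $\cG$ is compact. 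Caratheodory's theorem for convex cones then shows that every $p\in M^{\mt}_{\al,\beta}$ admits a representation
\[
 p=\sum_{j=1}^{m_1}G_j^*G_j+\sum_{i=1}^{m_2}F_i^*(I-\Ll_A)F_i,\qquad G_j\in\R^{1\times\mt}\ax_\al,\ F_i\in\R^{\ell\times\mt}\ax_\beta,
\]
in which $m_1+m_2$ is bounded by $\dim\R^{\mt\times\mt}\ax_{\mk}$, a bound independent of $p$.

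The crux is the a priori estimate. For any such representation and any $X\in\cB_\eps$ one has $I-\Ll_A(X)\succeq\frac12$, so every summand evaluated at $X$ is positive semidefinite and therefore $\preceq p(X)$; hence $\|G_j(X)\|^2\le\|p(X)\|$, and, by the estimate recorded just before Proposition \ref{prop:qmclosed}, $\|F_i(X)\|^2\le 2\|p(X)\|$. Taking the maximum over $X\in\cB_\eps$ gives $\|G_j\|^2\le\|p\|$ and $\|F_i\|^2\le 2\|p\|$. (It should also be noted in passing that \eqref{eq:normMe} is genuinely a norm, since $\cB_\eps$ contains an nc neighborhood of $0$ and a nonzero nc polynomial cannot vanish there.) Now if $p^{(r)}\to p$ with all $p^{(r)}\in M^{\mt}_{\al,\beta}$, then $\sup_r\|p^{(r)}\|<\infty$; choosing for each $r$ a representation as above (with at most $\dim\R^{\mt\times\mt}\ax_{\mk}$ summands, and padding with zeros so that $m_1,m_2$ are independent of $r$), the generators $G_j^{(r)}$, $F_i^{(r)}$ range over bounded subsets of the fixed finite-dimensional spaces $\R^{1\times\mt}\ax_\al$ and $\R^{\ell\times\mt}\ax_\beta$. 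Passing to a subsequence along which all of these converge, and using continuity of $G\mapsto G^*G$ and $F\mapsto F^*(I-\Ll_A)F$, yields a representation of the limit $p$, so $p\in M^{\mt}_{\al,\beta}$.

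The main obstacle — and the only point where the linearity of $q=I-\Ll_A$ (equivalently, the reduction to the concave case) is really exploited — is the upper bound $\|F_i\|^2\le 2\|p\|$ on the weights of the $q$-term. It rests on $I-\Ll_A$ being uniformly positive definite on $\cB_\eps$, which is exactly what makes the implication ``$\|f^*(I-\Ll_A)f\|\le N^2\Rightarrow\|f^*f\|\le 2N^2$'' hold; without such a bound the $F_i$ could escape to infinity along the sequence and closedness would fail. The Caratheodory step is the secondary ingredient, needed to keep the number of summands finite and fixed while this limiting argument is carried out.
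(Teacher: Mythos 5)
Your proof is correct and follows essentially the same route as the paper's: Caratheodory's theorem to cap the number of summands at a dimension-dependent constant, the norm estimates $\|G_j\|^2\le\|p\|$ and $\|F_i\|^2\le 2\|p\|$ (the latter resting on $I-\Ll_A\succeq\tfrac12$ on $\cB_\eps$) to bound the data of each representation, and then a subsequence extraction in the finite-dimensional spaces of weights. The only cosmetic difference is that you make the compact generating set $\cG$ and the conic version of Caratheodory explicit, where the paper just invokes Caratheodory directly on the representing tuples; the underlying argument is the same.
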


\begin{proof}
  This result is a consequence of Caratheodory's theorem on convex
 hulls \cite[Theorem I.2.3]{Ba}.  
  Suppose $(p_n)$ is a sequence from $M^{\mt}_{\al,\beta}$ 
  which converges to some
  $p\in\R^{\mt\times \mt}\ax$ of degree at most $\mk$.   
  By Caratheodory's theorem, there is an $M$ 
  (at most the dimension of $\R^{\mt\times\mt}\ax_{\mk}$  plus one)  such that
  for each $n$ there exist matrix-valued polynomials 
  $r_{n,i}\in\R^{\ell\times \mt}\ax_{\al}$ and 
  $t_{n,i}\in\R^{\ell\times \mt}\ax_{\beta}$ such that
\[
   p_n = \sum_{i=1}^{M} r_{n,i}^* r_{n,i}
         + \sum_{i=1}^M t_{n,i}^* (I-\Ll_A(x))t_{n,i}.
\]
  Since $\|p_n\| \le N^2$, it follows that $\|r_{n,i}\|\le
  N$ and likewise $\|t_{n,i}^* (1-\Ll_A(x)) t_{n,i}\|\le N^2$.  
  In view of the
  remarks preceding the proposition, we obtain $\|t_{n,i}\|
  \le \sqrt{2} N$ for all $i,n$. 
  Hence for each $i$, the sequences
  $(r_{n,i})$ and $(t_{n,i})$ are bounded in $n$.  They thus have
  convergent subsequences.   
  Tracking down  these subsequential limits finishes the  proof. 
\end{proof}
 
\subsection{Existence of a positive linear functional}
Let $\del=d+1$ and write $M^{\mt}_\del=M^{\mt}_{d+1,d}$. 
We call 
a linear functional on $\R^{\mt\times \mt}\ax_{2\del}$ 
positive (nonnegative) if it is positive (nonnegative) on $\Sigma_\del^\mt\setminus\{0\}$.

\begin{lem}
 \label{technical-lemma}
    There exists a positive  linear functional
    $\lL:\R^{\mt\times \mt}\ax_{2\del}\to \mathbb R$ 
     which is nonnegative on $M^{\mt}_\del$.
\end{lem}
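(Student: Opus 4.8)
The plan is to produce $\lL$ as a separating functional obtained by a Hahn--Banach argument. Suppose toward a contradiction that no positive linear functional on $\R^{\mt\times\mt}\ax_{2\del}$ is nonnegative on $M^{\mt}_\del$; equivalently (after a suitable normalization), fix some fixed element $e$ in the interior of the cone $\Sigma_\del^\mt$ — for instance a sum of squares that is strictly positive on $\cB_\eps$ — and consider the convex set $M^{\mt}_\del$ together with the point $-e$, or rather work with the truncated module shifted so that the obstruction is visible. First I would record that $M^{\mt}_\del = M^{\mt}_{d+1,d}(q)$ is a closed convex cone in the finite-dimensional space $\R^{\mt\times\mt}\ax_{2\del}$ by Proposition \ref{prop:qmclosed} (applied with $\al=d+1$, $\beta=d$, so $\mk=2d+2=2\del$), and that $\Sigma^\mt_\del$ has nonempty interior in the subspace of symmetric polynomials.

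The key step is the separation. I would consider the closed convex cone $M^{\mt}_\del$ and, inside the space of symmetric elements of $\R^{\mt\times\mt}\ax_{2\del}$, the compact convex base
\[
 \cK := \big\{ s\in\Sigma^\mt_\del \mid \lambda_0(s)=1 \big\},
\]
where $\lambda_0$ is any fixed strictly positive functional on $\Sigma^\mt_\del$ (such a $\lambda_0$ exists and $\cK$ is compact because $\Sigma^\mt_\del$ is a closed cone with compact base, a standard fact following from the fact that the only sum of squares vanishing identically on $\cB_\eps$ is $0$, so that $s\mapsto \max_{X\in\cB_\eps}\|s(X)\|$ is a norm on which $\lambda_0$ can be modeled). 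The crucial claim is that $\cK \cap M^{\mt}_\del = \emptyset$ — i.e.\ $-\cK \cap (-M^\mt_\del)$, phrased correctly: I want $(-\cK)\cap M^\mt_\del=\emptyset$, equivalently $0\notin \cK + M^\mt_\del$ is false in general, so the precise formulation is that we want to separate $-\cK$ from $M^\mt_\del$. This disjointness would follow if we knew $-s \notin M^\mt_\del$ for every $s\in\Sigma^\mt_\del\setminus\{0\}$. I would verify this by evaluating: if $-s = G + \sum f_i^*(I-\Ll_A)f_i$ with $G\in\Sigma^\mt_{d+1}$, then evaluating at $X=0$ gives $-s(0) = G(0) + \sum f_i(0)^*f_i(0)\succeq 0$, while $s(0)\succeq 0$, forcing $s(0)=0$ and $G(0)=0=f_i(0)$; one then iterates using the $\cB_\eps$ evaluations, or more cleanly uses that $I-\Ll_A$ has an nc neighborhood of $0$ on which it is $\succeq\frac12$, so evaluating $-s=G+\sum f_i^*(I-\Ll_A)f_i$ at small $X$ and letting the scale go to $0$ forces $s\equiv 0$. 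Having established $(-\cK)\cap M^\mt_\del=\emptyset$ with $-\cK$ compact and $M^\mt_\del$ closed convex, the Hahn--Banach separation theorem yields a linear functional $\lL$ on $\R^{\mt\times\mt}\ax_{2\del}$ and a scalar with $\lL < c \le \lL$ on the two sets; since $M^\mt_\del$ is a cone containing $0$, we get $\lL\ge 0$ on $M^\mt_\del$ (hence $\lL$ nonnegative on $M^\mt_\del$, and in particular $c\le 0$), and $\lL < c\le 0$ on $-\cK$, i.e.\ $\lL > 0$ on $\cK$, hence $\lL>0$ on $\Sigma^\mt_\del\setminus\{0\}$ by homogeneity. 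That is exactly the assertion: $\lL$ is positive (on $\Sigma^\mt_\del\setminus\{0\}$) and nonnegative on $M^\mt_\del$.

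I expect the main obstacle to be the verification that $\Sigma^\mt_\del$ is a closed cone with compact base and, relatedly, that $-s\notin M^\mt_\del$ for $0\ne s\in\Sigma^\mt_\del$ — this is where the specific structure of $q=I-\Ll_A$ and the presence of an nc neighborhood of $0$ inside $\ps_q$ is used, and it is the step that would fail for a non-monic $q$ (as the examples in the introduction illustrate). Once disjointness and compactness are in hand, the separation is routine; one should only be careful that the separating functional is genuinely strictly positive on all of $\Sigma^\mt_\del\setminus\{0\}$ and not merely nonnegative, which is guaranteed precisely because the base $\cK$ is compact and disjoint from the closed set $-M^\mt_\del$, so the separation can be taken strict.
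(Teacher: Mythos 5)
Your argument is correct, but it takes a genuinely different route from the paper's. The paper proves Lemma \ref{technical-lemma} by an entirely explicit construction: pick $\eps$ with $\cB_\eps\subseteq\ps_{I-\Ll_A}$, take a countable dense sequence $X^{(1)},X^{(2)},\ldots$ in $\cB_\eps(\del)$, and set
\[
\lL(p)=\sum_{i\ge 1}\frac{1}{2^i}\,\tr\big(p(X^{(i)})\big).
\]
Nonnegativity on $M^\mt_\del$ is immediate because each $X^{(i)}\in\ps_q$; strict positivity on $\Sigma^\mt_\del\setminus\{0\}$ follows from density plus the nonexistence of low-degree polynomial identities for matrices (if $\lL(r^*r)=0$, then $r$ vanishes on $\cB_\eps(\del)$, hence $r=0$). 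No separation theorem, no closedness of $M^\mt_\del$, and no compact base are needed.

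You instead manufacture $\lL$ abstractly, separating the reflected compact base $-\cK$ of $\Sigma^\mt_\del$ from the closed convex cone $M^\mt_\del$. This works: $\Sigma^\mt_\del$ is a closed pointed cone in a finite-dimensional space (pointedness because $s,-s\in\Sigma^\mt_\del$ forces $s\equiv 0$ by the same low-degree-PI argument), hence has a compact base; your disjointness check (evaluating a putative identity $-s=G+\sum f_i^*(I-\Ll_A)f_i$ on $\cB_\eps$, where $I-\Ll_A\succeq\frac12$, forces $s\equiv 0$) is sound and uses monicity of $q$ in the right place; and the passage from a strict hyperplane separation to ``$\lL\ge 0$ on the cone $M^\mt_\del$ and $\lL>0$ on $\cK$'' is standard. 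The trade-off is cost: your route leans on Proposition \ref{prop:qmclosed} and on the existence of a strictly positive $\lambda_0$ to build $\cK$ (which is itself a weak form of what you are trying to prove, though it is a general fact about closed pointed cones — the digression about $s\mapsto\max_{X\in\cB_\eps}\|s(X)\|$ being a norm doesn't produce a linear $\lambda_0$ and should be replaced by the dual-cone-has-interior argument). The paper's one-paragraph averaging construction avoids all of this machinery. Conceptually, though, your approach makes visible the geometric fact $-\Sigma^\mt_\del\cap M^\mt_\del=\{0\}$, which the paper never states.

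One small caution: you should either run the separation inside the subspace of symmetric elements of $\R^{\mt\times\mt}\ax_{2\del}$ (where both $-\cK$ and $M^\mt_\del$ live) and then extend $\lL$ arbitrarily, or note that the extension is harmless; as written the ambient space is slightly mismatched, but this is cosmetic.
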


\begin{proof}
As above, choose $1\geq\eps>0$ satisfying $\cB_\eps\subseteq\ps_{I-\Ll_A}.$
Select a countable dense subset $X^{(1)}$, $X^{(2)},\ldots$ of $\cB_\eps(\del)$
(e.g.~all tuples of matrices in $\cB_\eps(\del)$ with rational entries), and define
$\lL:
\R^{\mt\times \mt}\ax_{2\del}\to \mathbb R
$
as follows:
\[
\lL(p) := \sum_{i=1}^\infty \frac 1{2^i} \tr \big( p(X^{(i)}) \big).
\]
Clearly, $\lL(M_\del^{\mt})\subseteq\R_{\geq0}$. We claim that $\lL$ is strictly
positive on nonzero hermitian squares in $\Sigma^{\mt}_\del$.
Let $r\in \R^{\mt\times\mt}\ax_\del$ be arbitrary. If $\lL(r^*r)=0$, then
by density, $r$ vanishes on $\cB_\eps(\del)$, and by nonexistence of
low degree polynomial identities (see e.g.~\cite{Pro,Row}), 
$r=0$.
\end{proof}

\subsection{Separation}
\label{subsec:sep}

  The final ingredient in
  the proof of Proposition \ref{prop:igor} is a Hahn-Banach separation
  argument. Accordingly, 
  let $p\in\R^{\mt\times\mt}\ax_{2d+1}$ be given with
  $p(Y)\succeq 0$ for all $Y\in \ps_q$.  We are to show  
  $p\in M^{\mt}_\del$.

 If the conclusion  
 is false, then  by Proposition \ref{prop:qmclosed} 
 and the Hahn-Banach theorem
 there is a linear functional
 $\la:\R^{\mt\times\mt}\ax_{2\del}\to\R$ that 
 is nonnegative on $M^{\mt}_{\del}$ and
 negative on $p$. 
 Adding, if necessary, a small positive multiple of the linear
 functional $\lL$ produced by Lemma \ref{technical-lemma} to $\la$, 
 we can assume that $\la$ is  positive  (not just
 nonnegative) on $\Sigma^{\mt}_\del\setminus\{0\}$, nonnegative
 on $M^{\mt}_\del,$ and still negative on $p$. 
 But now Proposition \ref{prop:gns} 
 with $k=d$
 applies:
 there is a tuple
 of symmetric matrices $X\in \ps_q$ acting on a
finite-dimensional
 Hilbert
 space $\cX$ and a vector $\gamma$  such that  
\[
 \la(f)=\langle f(X)\gamma,\gamma\rangle
\]
  for all $f \in\R^{\mt\times\mt}\ax_{2d+1}$. In particular,
  \[
\langle p(X)\gamma,\gamma \rangle = \la(p)<0,\]
  so that $p(X)$ is not positive semidefinite, 
  contradicting $p|_{\ps_q}\succeq0$ and the proof is complete. 
\qed

 This argument is like  the classical one going back to 
 Putinar \cite{Put} and its noncommutative version in \cite{HM},
 but with a consequential difference.
 Possibly the best way to view this difference
 is in terms of the separating functional
 $\la.$  What is new here amounts to modifying $\la$ 
 to produce a new separating functional $\la'$, as in \eqref{eq:laprime}.
 It is this modified 
 functional that produces  perfection.
 In other Positivstellens\"atze, e.g.~\cite{HM}, 
 the proof does not do this modification
 of $\la$ and produces a tuple $X$ of bounded selfadjoint 
 operators which may act on an infinite-dimensional,
  rather than finite-dimensional, space and which also requires 
 $p$ to be \emph{strictly} positive on the underlying nc semialgebraic set.

\section{Applications}
We conclude this paper with applications of our main result and 
the techniques used in its proof.
First, in Subsection \ref{subsec:dominate} we revisit the theme of our paper \cite{HKM}, where
we discussed how complete positivity is equivalent to LMI domination
(i.e., inclusion of LMI domains). Here we strengthen some of 
our previous results by relaxing the assumptions.
Second, in Subsection \ref{subsec:beyondCon} we give a nonconvex
variant of Theorem \ref{thm:mainConcave} which in turn extends the directional
Positivstellensatz of \cite{HMP}.

\def\pL{L^\prime}
\def\pell{\ell^\prime}
\def\pA{A^\prime}

\subsection{Complete positivity and LMI domination}
 \label{subsec:dominate} 

In this section we assume basic familiarity with completely positive maps as presented 
e.g.~in \cite{BL04,Pau02,Pis03}.

   Suppose $L$ and $\pL$ are monic linear pencils 
in $g$ variables 
   of size $\ell$ and $\pell$ respectively. 
We say that $L$ \df{dominates} $\pL$ if $\ps_L\subseteq\ps_{\pL}$., i.e.,
$\pL|_{\ps_L}\succeq0$.
This situation is algebraically characterized by our Theorem \ref{thm:mainConcave}.

\begin{cor}\label{cor:ucp}
$L$ dominates $\pL$ if and only if $\pL \in M_{0,0}^{\pell}(L)$. Equivalently,
 $L$ dominates $\pL$ if and only if 
there are matrices $V_j\in\R^{\ell\times\pell}$ 
and a positive semidefinite
$S\in\bbS_{\pell}$ satisfying
\beq\label{eq:ucp1}
\pL(x) = S+ \sum_j V_j^* L(x) V_j.
\eeq
\end{cor}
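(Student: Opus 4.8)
The plan is to derive Corollary~\ref{cor:ucp} directly from Theorem~\ref{thm:mainConcave}(2) and then unwind what membership in $M_{0,0}^{\pell}(L)$ means concretely. First I would note that a monic linear pencil $L$ is in particular concave and monic, so the second part of Theorem~\ref{thm:mainConcave} applies with $q=L$ and $p=\pL$. Since $\pL$ is linear, $\deg(\pL)\le 1 = 2\cdot 0 + 1$, so we take $d=0$. Thus $\pL|_{\ps_L}\succeq 0$ (i.e.\ $L$ dominates $\pL$) is equivalent to $\pL\in M_{0,0}^{\pell}(L)$. This gives the first equivalence immediately.

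For the second equivalence I would spell out $M_{0,0}^{\pell}(L)$. By definition \eqref{eq:Malbeta}, an element of $M_{0,0}^{\pell}(L)$ has the form
\[
s^*s_{\phantom{j}} + \sum_j f_j^* L f_j,
\]
where $s\in\Sigma_0^{\pell}$ is a sum of squares of degree-$0$ (constant) matrix polynomials and each $f_j\in\R^{\ell\times\pell}\ax_0$ is also a constant matrix; write $f_j=V_j\in\R^{\ell\times\pell}$. A sum of squares of constant matrices is exactly an arbitrary positive semidefinite matrix $S\in\bbS_{\pell}$ (any $S\succeq 0$ is $S=T^*T$). Hence membership $\pL\in M_{0,0}^{\pell}(L)$ is precisely the assertion that there exist $V_j\in\R^{\ell\times\pell}$ and $S\succeq 0$ in $\bbS_{\pell}$ with $\pL(x)=S+\sum_j V_j^* L(x) V_j$, which is \eqref{eq:ucp1}.

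For the converse direction of the second equivalence there is nothing extra to prove: if $\pL=S+\sum_j V_j^* L V_j$ with $S\succeq 0$, then evaluating at any $X\in\ps_L$ gives $\pL(X)=S+\sum_j V_j^* L(X) V_j\succeq 0$ since $L(X)\succeq 0$ and $S\succeq 0$; so $L$ dominates $\pL$. This is just the automatic ``easy'' direction of the Positivstellensatz restricted to the constant case, and it also explains why no boundedness hypothesis is needed here even though the bounded addendum of Theorem~\ref{thm:mainConcave} would drop the $S$ term: when $\ps_L$ is bounded one can absorb $S$ into the weighted-square part, but in general $S\succeq 0$ must be allowed.

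There is essentially no obstacle: the only point requiring a word of care is the identification of $\Sigma_0^{\pell}$ with $\{S\in\bbS_{\pell}\mid S\succeq 0\}$ and the observation that constant $f_j$ are what $\R^{\ell\times\pell}\ax_0$ means, i.e.\ that $M_{0,0}$ truncates both the square part and the weight part to degree $0$. Once that bookkeeping is made explicit, the corollary is a direct translation of Theorem~\ref{thm:mainConcave}(2) with $d=0$. (The interpretation of \eqref{eq:ucp1} as a statement about unital completely positive maps sending the $A_j$'s to the $A_j'$'s, which motivates the label \texttt{cor:ucp}, would then follow from the standard matrix-valued analogue of the Stinespring/Arveson description, but that is commentary rather than part of the proof.)
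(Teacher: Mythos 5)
Your proof is correct and is exactly the intended derivation: Corollary~\ref{cor:ucp} is an immediate instance of Theorem~\ref{thm:mainConcave}(2) with $q=L$, $p=\pL$, $d=0$, together with the bookkeeping that identifies $\Sigma_0^{\pell}$ with positive semidefinite $S\in\bbS_{\pell}$ and $\R^{\ell\times\pell}\ax_0$ with constant matrices $V_j$. One tiny nit: part (2) of the theorem applies to monic linear pencils directly, so the detour through ``$L$ is concave and monic'' is unnecessary (and in fact concavity alone would only give you part (1), i.e.\ $M_{d+1,d}$ rather than $M_{d,d}$).
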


 The following proposition eliminates the need for the positive
  semidefinite $S$ in Corollary \ref{cor:ucp}
  and the (unweighted) sum of squares term in
  the representation \eqref{thmmainitem2} of
  Theorem \ref{thm:mainConcave} in the case
  that $\ps_L$ is bounded. Further, combining
 this proposition with the argument of
 Lemma \ref{lem:lincon} eliminates the need
  for the (unweighted) sum of squares term
  in \eqref{thmmainitem1} of Theorem \ref{thm:mainConcave}. 

\begin{prop}
 \label{prop:IVLV}
   If $\ps_L$ is bounded, then there are matrices 
  $W_j\in\R^{\ell\times\pell}$  such that 
\[
 I = \sum_j W_j^* L(x) W_j.
\]
\end{prop}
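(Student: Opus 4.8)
The plan is to exploit boundedness of $\ps_L$ to convert the constant polynomial $I$ into a certificate of the desired shape. Since $\ps_L$ is bounded, there is $R>0$ with $\|X\|<R$ for all $X\in\ps_L$; equivalently, for all $X\in\ps_L$ we have $R^2 I - \sum_j X_j^2 \succ 0$. The key preliminary observation is that the polynomial $p(x) := R^2 - \sum_j x_j^2$ has degree $2$ and is \emph{strictly} positive — in fact uniformly bounded below — on $\ps_L$, but more importantly, we can arrange to apply Theorem \ref{thm:mainConcave}(2) to the constant polynomial $I$ directly. So the first step is to write down, for some fixed large constant $c>0$, the polynomial $c\cdot I$ and observe trivially that $c I \succeq 0$ on $\ps_L$, hence by part (2) of Theorem \ref{thm:mainConcave} with $p = cI$ (which has degree $0\le 2\cdot 0+1$, so $d=0$) we get $cI = S + \sum_j V_j^* L V_j$ with $S\in\Sigma_0^{\pell}$ a constant positive semidefinite matrix and $V_j\in\R^{\ell\times\pell}$. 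This is just Corollary \ref{cor:ucp} applied to $\pL = cI$; it does not yet remove $S$.

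To remove $S$, I would use boundedness to absorb $S$ into the weighted term. The idea: because $\ps_L$ is bounded, the scalar linear functional / moment techniques of Subsection \ref{subsec:flathank}, or more concretely a compactness argument, show that there is no tuple $X\in\ps_L$ and unit vector $v$ with $L(X)v = 0$ \emph{and} simultaneously $\gamma$ witnessing degeneracy; rather, the cleaner route is: apply Theorem \ref{thm:mainConcave}(2) to $p = I - \epsilon(R^2 - \sum x_j^2)$ for small $\epsilon>0$. On $\ps_L$ we have $R^2-\sum X_j^2 \succeq 0$ is false in general — wait, it holds since $\|X\|<R$ — so $p(X) = I - \epsilon(R^2 I - \sum X_j^2) \preceq I$, which is not obviously $\succeq 0$. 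Instead choose $\epsilon$ small enough that $\epsilon R^2 < 1$; then $p(X) = (1-\epsilon R^2)I + \epsilon\sum X_j^2 \succeq 0$ on \emph{all} of $\SRnn^g$, in particular on $\ps_L$. Apply part (2): $p = \sum g_k^* g_k + \sum f_j^* L f_j$ with $g_k, f_j$ of degree $\le 1$. Now rearrange: $I = \epsilon R^2 I - \epsilon \sum x_j^2 + \sum g_k^* g_k + \sum f_j^* L f_j$. This still has a leftover sum of squares and a constant — I have not gained anything directly, so this approach needs the extra input that the \emph{constant term only} can be rewritten using $L$.

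The honest main step, and the one I expect to be the real obstacle, is the following: show that when $\ps_L$ is bounded, the constant $I$ already lies in $\mathring M_0^{\pell}(L) = \{\sum_j W_j^* L W_j\}$, i.e. that the unweighted square term in Corollary \ref{cor:ucp} for $\pL = I$ is unnecessary. I would prove this by a separation/duality argument mirroring Subsection \ref{subsec:sep}: the set $\{\sum_j W_j^* L W_j : W_j\in\R^{\ell\times\pell}\}$ is a closed cone in $\SRnn_{\pell}$-valued constants (closedness by the Caratheodory argument of Proposition \ref{prop:qmclosed}, using boundedness of $\ps_L$ to get the needed coercivity — this is where boundedness is essential, since without it $\|f^* L f\|$ small need not force $\|f\|$ small). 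If $I$ were not in this cone, Hahn–Banach gives a linear functional $\mu$ on $\R^{\pell\times\pell}$ (constants), nonnegative on all $W^* L W$ and negative at $I$; running the GNS construction of Proposition \ref{prop:gns} with $k=0$ produces a single tuple $X$ with $L(X)\succeq 0$, i.e. $X\in\ps_L$, and a vector $\gamma$ with $\langle \gamma,\gamma\rangle = \mu(I) < 0$, an absurdity. Hence $I\in\mathring M_0^{\pell}(L)$, which is exactly the claim. The delicate point throughout is verifying the closedness/coercivity of the purely-weighted cone $\{\sum W_j^* L W_j\}$ from boundedness of $\ps_L$; once that is in hand the separation argument is routine and parallels the proof of Proposition \ref{prop:igor}.
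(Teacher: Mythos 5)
Your first two paragraphs are acknowledged dead ends, so consider the third. The plan there --- show $I\in\mathring M_0^{\pell}(L)$ by a separation argument and then run GNS --- breaks at the GNS step. Proposition~\ref{prop:gns} hypothesizes a functional that is nonnegative on $\Sigma_{k+1}^{\nu}$ and positive on $\Sigma_k^{\nu}\setminus\{0\}$; that positivity is precisely what makes the form $\langle f,h\rangle=\lambda(h^*f)$ positive semidefinite and hence an inner product after quotienting. But your separating functional $\mu$ has $\mu(I)<0$, while $I=I^*I$ is itself a hermitian square lying in $\Sigma_0^{\pell}\subset\Sigma_1^{\pell}$. So $\mu$ is manifestly \emph{not} nonnegative on squares, the GNS construction does not apply, and there is no tuple $(X,\gamma)$. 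The line ``a vector $\gamma$ with $\langle\gamma,\gamma\rangle=\mu(I)<0$'' is not a contradiction derived from a hypothesis; it is a symptom that the argument is internally inconsistent. (A smaller inaccuracy: closedness of $\{\sum_j W_j^*LW_j\}$ does \emph{not} require boundedness of $\ps_L$. Since $L$ is monic, evaluating at $X=0$ gives $\|\sum_j W_j^*LW_j\|\ge\|\sum_j W_j^*W_j\|\ge\max_j\|W_j\|^2$, so the Caratheodory argument of Proposition~\ref{prop:qmclosed} already applies; boundedness is used for something else entirely.)

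The paper's actual proof is elementary finite-dimensional convex geometry and separates in $\R^g$, not in the cone of certificates. Writing $L=I-\sum_j A_jx_j$, consider the continuous map $h\mapsto(\langle A_jh,h\rangle)_{j=1}^g$ from the unit sphere of $\R^\ell$ into $\R^g$. If $0$ is not in the convex hull of its image, a separating $\lambda\in\R^g$ gives $\sum_j\lambda_jA_j\succ0$, whence $L(t\lambda_1,\dots,t\lambda_g)=I-t\sum_j\lambda_jA_j\succ0$ for all $t\le0$ --- an unbounded ray in $\ps_L$, contradicting boundedness. Thus $0$ lies in the convex hull, giving finitely many vectors $h_k\in\R^\ell$ with $\sum_k\|h_k\|^2=1$ and $\sum_k\langle A_jh_k,h_k\rangle=0$ for each $j$; setting $W_{k,s}:=h_ke_s^*$ with $e_s$ the standard basis of $\R^{\pell}$ yields $\sum_{k,s}W_{k,s}^*L(x)W_{k,s}=I$. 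Your intuition that boundedness serves to forbid an unbounded ray is exactly right, and a duality argument in the spirit you propose \emph{can} be made to work --- but by analyzing the moment matrices of the separating functional on $\R^{\pell\times\pell}\ax_1$ directly (restricting $\mu(W^*LW)\ge0$ to rank-one $W=he_s^*$ reduces to the paper's sphere-map inequality), not by invoking Proposition~\ref{prop:gns}, which is inapplicable for the reason above.
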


\begin{cor}[cf.~\protect{\cite[Theorem 1.1]{HKM}}]%\label{cor:ucp2}
Suppose $\ps_L$ is bounded. Then $L$  dominates $L'$ if and only if
there are 
matrices $V_i\in\R^{\ell\times\pell}$ satisfying
\beq\label{eq:ucp2}
L'(x) = \sum_i V_i^* L(x) V_i.
\eeq
\end{cor}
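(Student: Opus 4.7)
The plan is to combine Corollary \ref{cor:ucp} with Proposition \ref{prop:IVLV} to absorb the positive semidefinite constant term into a single weighted sum. The backward direction is immediate: if $\pL(x) = \sum_i V_i^* L(x) V_i$, then for any $X \in \ps_L$ one has $L(X)\succeq 0$, hence each summand $(V_i \otimes I_n)^* L(X) (V_i \otimes I_n)$ is positive semidefinite, whence $\pL(X) \succeq 0$ and $\ps_L \subseteq \ps_{\pL}$.

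For the forward direction, first apply Corollary \ref{cor:ucp} to obtain a representation
$$
\pL(x) = S + \sum_j V_j^* L(x) V_j
$$
with $S \in \bbS_{\pell}$ positive semidefinite and $V_j \in \R^{\ell \times \pell}$. The task is to absorb the constant $S$ into a weighted sum of the form $\sum_k U_k^* L(x) U_k$. This is where the boundedness hypothesis enters via Proposition \ref{prop:IVLV}, which yields matrices $W_k \in \R^{\ell \times \pell}$ with $I_{\pell} = \sum_k W_k^* L(x) W_k$. Since $S \succeq 0$, factor $S = T^* T$ for some $T \in \R^{\pell \times \pell}$ (for instance, $T = S^{1/2}$). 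Then
$$
S = T^* I_{\pell} T = \sum_k (W_k T)^* L(x) (W_k T),
$$
and combining with the original identity gives
$$
\pL(x) = \sum_j V_j^* L(x) V_j + \sum_k (W_k T)^* L(x) (W_k T),
$$
which is the claimed representation after relabeling the matrices $V_j$ and $W_k T$ as a single family $V_i \in \R^{\ell\times\pell}$.

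No substantive obstacle arises once Corollary \ref{cor:ucp} and Proposition \ref{prop:IVLV} are in hand; the argument is essentially a bookkeeping assembly. The one point to verify is that the matrix sizes align: Proposition \ref{prop:IVLV} supplies an identity of size $\pell \times \pell$, which is precisely the size of $S$, so the composition $W_k T$ lands in $\R^{\ell\times\pell}$, matching the format of the $V_j$. Conceptually, the content of the corollary is that boundedness of $\ps_L$ provides a ``partition of unity'' expressing $I_{\pell}$ as a positive $L$-combination, which in turn lets any positive semidefinite residue $S$ be merged into the LMI-weighted part of the representation.
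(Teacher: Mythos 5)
Your argument is correct and is essentially the paper's own proof: the paper likewise invokes Corollary \ref{cor:ucp}, factors $S=C^*C$, and uses Proposition \ref{prop:IVLV} to write $S=\sum_j (W_jC)^*L(x)(W_jC)$, merging the constant term into the weighted sum. Your additional remarks on the trivial backward direction and the size bookkeeping are fine but add nothing beyond the paper's two-line proof.
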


\begin{proof}
  Factoring $S$ as $S=C^*C$ gives, in the notation of
  Proposition \ref{prop:IVLV},
\[
  S= \sum_j (W_j C)^* L(x) (W_j C).
\]
  An application of Corollary \ref{cor:ucp} then completes the proof.
\end{proof}

\def\bbS{\mathbb S}

\begin{proof}[Proof of Proposition~{\rm\ref{prop:IVLV}}]
Write $L(x)=I-\sum_j^g A_j x_j$ with $A_j \in \R^{\ell \times \ell}$.
To show there are finitely many, say $m$,
   nonzero vectors $h_k$ such that $\sum_k \langle h_k, h_k \rangle =1$ and
\[ 
  \sum_{k=1}^m \langle A_j h_k,h_k \rangle =0
\]
 for each $j,$ 
 let $\bbS^\ell$ denote the unit sphere in $\R^\ell$ and  
 consider the mapping 
\[
\bbS^\ell \to \R^g, \qquad  h \mapsto (\langle A_j h,h \rangle)_j
= \begin{bmatrix} \langle A_1 h,h \rangle & \cdots &
\langle A_g h,h \rangle \end{bmatrix}^*.
\]
If $0$ is not in the convex hull of the range of this map, then by
the Hahn-Banach theorem  there is a
linear functional $\lambda:\R^g\to\R$ such that 
\[\lambda \big( (\langle A_j h,h \rangle)_j \big) > 0
\]
for all  $h$. Let $\lambda_j=\lambda(e_j)$, where $e_1,\ldots,e_g$ is 
the standard orthonormal basis for $\mathbb R^{g}$. Then
\[
L( t \lambda_1,\ldots, t \lambda_g ) = 
I - t \sum_j \lambda_j A_j
\]
satisfies
\[
\langle L( t \lambda_1,\ldots, t \lambda_g) h,h\rangle
= \langle h,h\rangle - t \sum_j \lambda_j \langle A_jh,h\rangle 
>0
\]
 for all $t\leq 0$ and all nonzero $h$, contradicting the boundedness of
$\ps_L$.   Hence, $0$ is in the convex hull which says that the desired
$h_k$ exist.

 To complete the proof, let $V_{k,s} = h_k e_s^*$, where
 $e_1,\dots,e_{\pell}$ is the standard orthonormal basis for 
  $\mathbb R^{\pell}$. Thus, $V_{k,s}$ is the $\ell\times \ell^\prime$
 matrix expressed in terms of its columns as
\[
  V_{k,s} = \begin{bmatrix} 0 & \cdots &  0 &  h_k &  0 & \cdots 0\end{bmatrix}
\] 
 (where the $h_k$ is in the $s$-th column).  Now,
\bes
\begin{aligned}
  \sum_{k,s} V_{k,s}^* L(x) V_{k,s} 
   & = \sum_{k,s} e_s h_k^* (I-\sum_j A_j x_j) h_k e_s^* \\
   & =  \sum_s \Big(\sum_k \langle h_k,h_k\rangle 
        - \sum_k \big(\sum_j \langle A_j h_k,h_k \rangle\big) \Big) e_s e_s^* \\
   & =  \sum_s e_s e_s^* = I,
\end{aligned}
\ees
as desired.
\end{proof}

\begin{rem}\rm
Suppose $L$ dominates $L'$.
In case $\ps_L$ is not bounded, the positive $S$ in 
a certificate of the form \eqref{eq:ucp1}
\emph{is} needed in general.
An expression of the form \eqref{eq:ucp2} 
can be achieved for every $L'$ dominated by $L$ if and only if 
such a representation exists for $L'=I$. 
As seen in the proof of Proposition \ref{prop:IVLV},
this is the case if and only if there are vectors $h_k$, not all zero,
satisfying
\[ \sum_k \langle A_j h_k,h_k \rangle =0\]
 for each $j$.
By an old result of Bohnenblust (see \cite{Bon} for the original reference 
or \cite[\S 2.2]{KS} for an easier proof of a weaker statement sufficient
for our purpose), this 
happens if and only if 
$\mbox{span}(\{A_1,\dots,A_g\})$ does not contain a positive definite matrix.
\end{rem}

  Writing $L= I - \sum A_j x_j$ and $\pL = I - \sum \pA_j x_j,$ let
\[
 \mathcal S =\mbox{span}(\{I,A_1,\dots,A_g\}) \subset \bbS_{\ell}
\]
be the \df{operator system} associated to the monic linear pencil $L$, 
 and similarly for $\cS^\prime$. 
  The approach taken in \cite{HKM} was to view the inclusion
 $\ps_{L} \subset \ps_{\pL}$ (under the assumption of \emph{boundedness} 
of $\ps_L$) as saying that the unital mapping
\[
 \tau: \mathcal S \to \mathcal S^\prime
\]
  defined by $\tau(A_j)=\pA_j$ is (well-defined) completely positive and then
  applying the Arveson-Stinespring representation theorem \cite{BL04, Pau02,Pis03}
for completely positive
  maps.  
  Since the approach in this paper avoids the complete positivity machinery,
  it is interesting to note that
Theorem \ref{thm:mainConcave}
    implies both the Arveson Extension Theorem
and the Stinespring Theorem for matrices   (as opposed to operators).
To see why, suppose
   $\mathcal S$ and $\mathcal S^\prime$ are unital subspaces
   of $\bbS_{\ell}$ and $\bbS_{\pell}$
   respectively, and  $\tau:\cS \to \cS^\prime$ is
unital and   completely positive.
Choose $A_1,\dots,A_g$
   such that $\{I,A_1,\dots,A_g\}$ is a basis for $\cS$.
   By \cite[Proposition 4.3.2]{KS} the matrices
    $A_j$ can be chosen to make $\ps_L$ bounded; here $L$
denotes    the pencil 
   $I-\sum A_j x_j$.
   With $A_j^\prime = \tau(A_j)$, the pencil
   $L$ dominates the pencil $L^\prime=I-\sum \pA_j x_j$.
Now invoke Theorem \ref{thm:mainConcave} (for bounded domains) to
get Arveson's extension as well as Stinespring's theorem.
   The non-uniqueness of this construction is described
   by simultaneous invertible linear change of variables (on
   both the domain $\ps_L$ and codomain $\ps_{L'}$).

\subsection{Beyond convexity: a harsher positivity test}
\label{subsec:beyondCon}

 \def\be{\beta}
 
 The Positivstellensatz in \cite{HM}
 assumes the underlying semialgebraic set is bounded,
 whereas Theorem \ref{thm:mainConcave} assumes the
 set is convex. In this section we consider a case
 which lies in between.  
 For simplicity we take our polynomials to be scalar-valued.
%The interested reader should have no problems extending the results
%to matrix-valued polynomials. 

 Given a finite set $S$ of
 symmetric noncommutative polynomials whose degrees are at most $a,$
 let $Q=\{1-s^*s \mid s\in S\}$.  
 We will develop a positivity condition for a 
 polynomial $p$ of degree at most $2 d$ equivalent to $p$ lying in
  the convex cone
\[
  M_{d+a,\be }(Q) = \Sigma_{d+a} + 
\Big\{ \sum_{q\in Q} \sum_j^{\rm finite} f_{j,q}^* q f_{j,q} \mid 
   f_{j,q}\in\R\ax_\be \Big\}.
\]
(Here, and in the rest of this subsection, we omit the superscripts
in the notation for quadratic modules, since we are dealing only with
scalar-valued polynomials.)

 Let $\cX$ be a finite-dimensional  Hilbert space. 
 Given a vector $\zeta\in\cX$, natural number $\eta,$  and 
 a tuple $X$ of symmetric operators on $\cX$, 
 let $O_{X,\ze}^{\eta}$ denote the subspace
 $$O_{X,\ze}^{\eta}:= \{ f(X)\ze \mid f\in\R\ax_\eta \}$$
 of $\cX$ 
  and $P_{X,\ze}^{\eta}$ be the orthogonal  projection of
  $\cX$ onto this space.
 Generically, the dimension of $O_{X,\ze}^\eta$ is $\sigma_\#(\eta)$.
 The following is a free nonconvex Positivstellensatz with degree
  bounds.  

\begin{thm}[Beyond convex]
\label{thm:highDeg}
Let $p\in\R\ax_{2d}$ be symmetric and fix 
 an integer $0 \leq \be <d$.
 Assume that $\ps_Q$ contains a nontrivial
 nc neighborhood of $0$.
  If for any Hilbert space $\cX$
 of dimension  $\sigma_\#(d+a -1),$  any
  $g$-tuple of matrices $X$ acting on $\cX$
  and vector $\ze\in\cX$, 
\[
 P_{X,\ze}^{\be} \big(1- s^*(X)s(X) \big) P_{X,\ze}^{\be} \succeq 0
\quad \textrm{for all } s\in S
\]
 implies
\[
 \langle p(X) \ze, \ze\rangle \geq 0,
\]
 then $p \in M_{d+a,\be}(Q)$.
  $($The converse is obviously true.$)$
\end{thm}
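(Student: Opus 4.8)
The plan is to mimic the proof of Proposition \ref{prop:igor}, i.e.~a Hahn--Banach separation combined with a GNS-type construction, but now with the harsher ``projected positivity'' hypothesis taking the place of the pointwise positivity of $p$ on $\ps_q$. First I would show that the cone $M_{d+a,\be}(Q)$ is closed in the finite-dimensional space $\R\ax_{2(d+a)}$; this uses Caratheodory's theorem exactly as in Proposition \ref{prop:qmclosed}, together with the observation (available because $\ps_Q$ contains an nc neighborhood of $0$) that $1-s^*s$ is bounded below by a positive multiple of the identity on a small ball $\cB_\eps$, so that a bound on $\|f_j^*(1-s^*s)f_j\|$ gives a bound on $\|f_j^*f_j\|$. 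I would also need a strictly positive linear functional on $\R\ax_{2(d+a)}$ that is nonnegative on $M_{d+a,\be}(Q)$; this is produced, as in Lemma \ref{technical-lemma}, by integrating $\tr\,p(X)$ over a countable dense set of rational tuples in $\cB_\eps$ of the appropriate size and invoking the nonexistence of low-degree polynomial identities.

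Assume now, for contradiction, that $p\notin M_{d+a,\be}(Q)$. Closedness of the cone and the Hahn--Banach theorem give a linear functional $\la:\R\ax_{2(d+a)}\to\R$ nonnegative on $M_{d+a,\be}(Q)$ and negative on $p$; after adding a small positive multiple of the functional from the previous step, we may assume $\la$ is strictly positive on $\Sigma_{d+a}\setminus\{0\}$ as well. Running the GNS construction of Proposition \ref{prop:gns} (with $\mt=1$ and $k=d+a-1$) on the bilinear form $\langle f,h\rangle=\la(h^*f)$ produces a tuple $X$ of symmetric operators on a Hilbert space $\cX$ of dimension $\sigma_\#(d+a-1)$, together with a cyclic vector $\ze$ (the image of the empty word), such that $\la(f)=\langle f(X)\ze,\ze\rangle$ for all $f\in\R\ax_{2(d+a)-1}$. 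The key point, echoing Remark \ref{rem:modify}, is the one-degree gap: the relevant test polynomials have degree strictly less than $2(d+a)$, so $\la$ and the GNS functional $\la'$ agree on them.

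The heart of the argument is then to verify that this $(X,\ze)$ actually satisfies the projected positivity hypothesis of the theorem, so that the hypothesis forces $\langle p(X)\ze,\ze\rangle\ge 0$, contradicting $\la(p)<0$. For each $s\in S$ and each $f,h\in\R\ax_\be$, the product $h^*(1-s^*s)f$ has degree at most $2\be + a \le 2(d-1)+a < 2(d+a)-1$, so $\la$ applied to it equals $\langle (1-s(X)^*s(X))f(X)\ze,h(X)\ze\rangle$; since $h^*(1-s^*s)f$ ranges over (combinations of) the generators $f^*(1-s^*s)f$ of $M_{d+a,\be}(Q)$ as $f=h$, and $\la$ is nonnegative on that cone, the compression of $1-s(X)^*s(X)$ to $O_{X,\ze}^{\be}=\{f(X)\ze\mid f\in\R\ax_\be\}$ is positive semidefinite, i.e.~$P_{X,\ze}^{\be}\big(1-s(X)^*s(X)\big)P_{X,\ze}^{\be}\succeq 0$. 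Applying the hypothesis of the theorem gives $\langle p(X)\ze,\ze\rangle\ge0$; but $p$ has degree at most $2d<2(d+a)-1$, so $\langle p(X)\ze,\ze\rangle=\la(p)<0$, the desired contradiction. The main obstacle I anticipate is bookkeeping the degree inequalities so that every polynomial whose $\la$-value we need to identify with an $\langle\cdot(X)\ze,\ze\rangle$-expression has degree at most $2(d+a)-1$; this is why the cone is built at level $d+a$ rather than $d$, and why the dimension bound in the theorem is $\sigma_\#(d+a-1)$. One should also double-check that, since there is no monic-pencil term available here, the neighborhood-of-zero assumption on $\ps_Q$ is genuinely used — it enters only through the closedness of the cone and the construction of the strictly positive functional, not through any positivity of $X$ itself.
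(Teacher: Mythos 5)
Your proposal is correct and follows essentially the same route as the paper's sketch: close the cone, separate, refine to a strictly positive functional, run the GNS construction at level $k=d+a-1$, and use the one-degree gap to identify $\la$ with $\langle\,\cdot\,(X)\zeta,\zeta\rangle$ on all the relevant test polynomials, including $f^*(1-s^*s)f$ for $\deg f\le\beta$, which yields the projected positivity and hence the contradiction. One small slip: $h^*(1-s^*s)f$ with $\deg f,\deg h\le\beta$ and $\deg s\le a$ has degree at most $2\beta+2a$, not $2\beta+a$; since $2\beta+2a\le 2(d+a)-2<2(d+a)-1$ the conclusion is unaffected, but the stated bound should be corrected.
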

In 
other words a clean Positivstellensatz holds without
 concavity of $Q$ (the collection $S$), provided
 we test positivity of  $p$ on a sufficiently 
 large class of matrices and vectors.

\begin{rem}\rm
% \label{rem:beyondbeyond}
\mbox{}\par
\ben[\rm (1)]
\item
  If $a=1$ and $\be=d$, then generically dimension counting tells us
$O_{X,d}^{d} $ is $\cX$, and we are back in the setting
 of Theorem {\rm\ref{thm:mainConcave}}.
\item 
The condition: $\langle p(X)\ze,\ze \rangle >0$ provided 
$
 \ze^* (1- s^*(X) s(X)  ) \ze \geq 0
$
is a  condition converted to a Positivstellensatz in 
 \cite{HMP}.  The $\be=0$ case of Theorem {\rm\ref{thm:highDeg}}
 improves this, indeed makes a perfect version.
\een
\end{rem}

\begin{proof}[Sketch of proof of Theorem {\rm\ref{thm:highDeg}}]
Abbreviate $M_{d+a,\be}(Q)$   to $M_{d+a,\be}$.
  Suppose $p$ has degree at most $2d$, but is not in $M_{d+a,\be}$.
  The  Proposition  \ref{prop:qmclosed} extends to show $M_{d+a,\be}$
  is closed, with an easy generalization of the same argument.
  Then there is a positive linear functional $\la:\R\ax_{2(d+a)}\to \mathbb R$
  that is nonnegative on $M_{d+a,\be}$
   but such that $\la(p)<0$;  see
  Lemma \ref{technical-lemma}, a variant of which
  is needed to see that such an $\la$ can be chosen 
  positive, not just nonnegative on $\Sigma_{d+a}\setminus\{0\}$. 
  Applying Proposition \ref{prop:gns}
  produces a finite-dimensional Hilbert space
  $\cX$, a tuple of matrices 
  $X$ on $\cX$ and cyclic vector $\gamma$ such that for 
  any polynomial $f$ of degree at most $2(d+a)-1$, 
\[
  \langle f(X)\gamma, \gamma \rangle 
      = \la(f).
\]
  In this context, the analog of the further part of Proposition 
  \ref{prop:gns} is the following. 
  If $f$ is of degree at most $d-1$ and $s\in S$, then
\[
  \big\langle (I-s(X)^*s(X)) f(X)\gamma, f(X)\gamma \big\rangle
    = \la(f^*(I-ss^*)f) \ge 0.
\]
 On the other hand, 
\[
   \langle p(X)\gamma,\gamma \rangle = \la(p)<0,
\]
yielding 
a contradiction.
\end{proof}

\end{document}